\makeatletter \@addtoreset{equation}{section} \makeatother
\renewcommand\thetable{\thesection.\@arabic\c@table}
\theoremstyle{plain}
\newtheorem{maintheorem}{Theorem}
\newtheorem{maincorollary}{Corollary}
\newtheorem{theorem}{Theorem }[section]
\newtheorem{proposition}[theorem]{Proposition}
\newtheorem{lemma}[theorem]{Lemma}
\newtheorem{corollary}[theorem]{Corollary}
\theoremstyle{definition} \theoremstyle{remark}
\newtheorem{remark}[theorem]{Remark}
\newtheorem{example}[theorem]{Example}
\newtheorem{definition}[theorem]{Definition}
\newcommand{\al} {\alpha}       
\newcommand{\be} {\beta}        
\newcommand{\ga} {\gamma}    
\newcommand{\de} {\delta}       \newcommand{\De}{\Delta}
\newcommand{\vep}{\varepsilon}
\newcommand{\la} {\lambda}      \newcommand{\La}{\Lambda}
\newcommand{\si} {\sigma}
\newcommand{\om} {\omega}
\newcommand{\supp}{\operatorname{supp}}
\newcommand{\dist}{\operatorname{dist}}
\newcommand{\Leb}{\operatorname{Leb}}
\newcommand{\cH}{{\mathcal H}}
\newcommand{\cC}{\mathcal{C}}
\newcommand{\cR}{\mathcal{R}}
\newcommand{\cP}{\mathcal{P}}
\newcommand{\cE}{\mathcal{E}}
\newcommand{\cO}{\mathcal{O}}
\newcommand{\cG}{\mathcal{G}}
\newcommand{\cU}{\mathcal{U}}
\newcommand{\cA}{\mathcal{A}}
\begin{document}

\title{Statistical properties of generalized Viana maps}

\author{Paulo Varandas}
\address{Departamento de Matem\'atica, Universidade Federal da Bahia\\
  Av. Ademar de Barros s/n, 40170-110 Salvador, Brazil.}
\email{paulo.varandas@ufba.br \\ \url{http://www.pgmat.ufba.br/varandas/}}

\begin{abstract}
We study quadratic skew-products with parameters driven over piecewise expanding and Markov interval maps 
with countable many inverse branches, a generalization of the class of maps introduced by 
Viana \cite{Vi97}. In particular we construct a class of multidimensional non-uniformly expanding attractors that 
exhibit both  critical points and discontinuities and prove existence and uniqueness of an SRB measure with 
stretched-exponential decay of correlations, stretched-exponential large deviations and satisfying some limit laws.
Moreover, generically such maps admit the coexistence of a dense subset of points with negative
central Lyapunov exponent together with a full Lebesgue measure subset of points which have positive 
Lyapunov exponents in all directions. Finally, we discuss the existence of SRB measures for skew-products 
associated to hyperbolic parameters by the study of fibered hyperbolic maps.
\end{abstract}

\keywords{Non-uniform hyperbolicity, Lyapunov exponents, SRB measure}

 \footnotetext{2000 {\it Mathematics Subject classification}:
 37A30, 37D35, 37H15, 60F10}

\date{\today}

\maketitle 

\section{Introduction}

Since the 1960's, when the concept of uniform hyperbolicity was coined by Smale in~\cite{Sm67}, a
relevant question in dynamical systems is to construct examples that exhibit the hyperbolic features 
described by the theory. 
In fact, Hunt and Mackay~\cite{HM03} proved that uniformly hyperbolic dynamical systems, among 
which Smale's horseshoe is a paradigmatic example, arise naturally in physical systems. 
On other direction, simple one-dimensional examples arising from populational dynamics led to consider the quadratic family $T_a(x)=a x(1-x)$, or equivalently $f_a(x)=1-ax^2$, that 
despite the simple formulation presents very rich and complex dynamics. 
In fact, it follows from pioneering works by Jakobson, Benedicks and Carleson~\cite{Jak81, BC85} that there exists 
a positive Lebesgue measure set 
$\Delta\subset (0,2]$ such that $f_a(x)=1-ax^2$ has an absolutely continuous ergodic probability measure 
with positive Lyapunov exponent for all $a\in \De$, ie,  is non-uniformly hyperbolic.
Later, Graczyk and \'Swi\c atek~\cite{GS97} and Lyubich~\cite{Lyu97} proved that $f_a$ is hyperbolic for an open and dense set of parameters $a\in(0,2]$. 

Much more recently, in a major breakthrough, in~\cite{KSvS07a}  Kozlovski, Shen and van Strien proved 
that hyperbolicity is open and dense among $C^k$ maps of the interval or the circle $(k\ge 1)$. In particular this shows that although persistent, meaning a positive Lebesgue measure phenomenon for parametrized families,  
one-dimensional examples with robust  non-uniform hyperbolicity do not exist. 

 In a higher dimensional setting the existence of robust non-uniform hyperbolicity was addressed by 
Viana~\cite{Vi97} that introduced a class of $C^3$-maps of the cyclinder:
they are any small $C^3$-perturbations $\varphi$ of the skew-product transformations
\begin{equation*}\label{eq:Viana.maps}
\begin{array}{cccc}
\varphi_\al: & S^1 \times I & \to & S^1 \times I \\
		 & (\theta,x)& \mapsto & (g(\theta), f_\al(\theta,x))
\end{array}
\end{equation*}
where $g$ is an expanding map of the unit circle $S^1$ with  $|g'(\theta)|\ge d$ where $d\ge 16$ and 
$f_\al(\theta,x)=a_0+\al\sin(2\pi\theta) -x^2$ for some small $\al>0$ and parameter $a_0\in (0,2]$ such that
the quadratic map $h(x)=a_0-x^2$ is of Misiurewicz type. Despite the presence of a critical region, Viana 
proved that this class of transformations of  the cylinder $S^1\times \mathbb R$ have positive Lyapunov exponents 
in every direction, that is
$$
\liminf_{n\to\infty} \frac1n \log \|D\varphi^n(\theta,x) v \|>0
$$  
for Lebesgue almost every $(\theta,x)$ and all $v\in T_{(\theta,x)} (S^1\times \mathbb R)$.
Building over this,  Alves~\cite{Al00} proved that there is a unique $\varphi$-invariant probability 
measure absolutely continuous with respect to Lebesgue. In other words, this class of $C^3$-endomorphisms of 
the cylinder present  a robust non-uniform hyperbolicity phenomenon.  
More recent contributions and extensions include the ones by Gouzel \cite{Gou07} on the skew products
with curve of neutral fixed points, by Buzzi, Sester and Tsujii~\cite{BST03} 
for $C^\infty$-perturbations of the skew product $\varphi_\al$ with a weaker condition $d\ge 2$ and later on by Schnellmann~\cite{Sc08} that considered 
$\beta$-transformations and by Schnellmann, Gao, Shen~\cite{Sc09,GS12} considering Misiurewicz-Thurston 
quadratic maps as the base dynamics. In these results the proof that Lebesgue almost every point has only 
positive Lyapunov exponents exploits, in the spirit of \cite{Vi97}, some weak hyperbolicity condition, namely dominated decomposition.

An important challenge in dynamics is to construct multidimensional attractors with critical behaviour 
without dominated splittings but persistence of positive Lyapunov exponents in parameter space.
It was proposed by Bonatti, D\'iaz and Viana in ~\cite{BDV05} 
that such phenomena might occur in a parametrized family $F(x,y) = (a(x,y)-x^2, b(x,y)-y^2))$. 
As mentioned above, important contributions were given in \cite{Sc09,GS12} where the authors proved non-uniform expansion for skew-product  of quadratic maps over a Misiurewicz-Thurston quadratic map. 
Since parameters corresponding to Misiurewicz-Thurston quadratic maps have zero Lebesgue measure in the 
parameter space then the previous question remains open.

Our purpose in this paper is to study a class of quadratic skew-products over a Markov expanding map of 
the interval with at most  countably many inverse branches: skew-products  $\varphi_\al(\theta,x)= 
(g(\theta), f_\al(\theta,x))$ with $g$ piecewise expanding Markov map of the unit interval and  $f_\al(\theta,x)=a_0+\al\sin(2\pi\theta) -x^2$ for some small $\al>0$ and parameter $a_0\in (0,2]$. 
One important motivation to consider skew products of quadratic maps over 
expanding dynamics with infinitely many branches is to understand if the technique of  inducing can be an useful 
approach to the previous question since non-uniform expansion is well known to be related with inducing schemes leading to piecewise expanding maps with infinitely many branches.
We prove that the behaviour of this class of transformations has different behaviours depending on the parameter $a_0$.

In the one hand, if $h(x)=a_0-x^2$ is a Misiurewicz quadratic map then
we overcome the difficulty caused by the presence of critical points and infinitely many invertibility branches 
to prove the existence of two positive Lyapunov exponents at Lebesgue almost every point, that there
exists a unique absolutely continuous invariant measure and that it satisfies good statistical properties.
Moreover, 
we also prove that generically such transformations admit the coexistence of the full Lebesgue measure set of points which have only positive Lyapunov exponents together with a dense set of points with one positive and one negative Lyapunov exponents, a fact that was unkown even in the context of Viana maps in \cite{Vi97}. 
On the other hand, if the quadratic map $h(x)=a_0-x^2$ is hyperbolic then the skew product $\varphi_\al$
admits an hyperbolic SRB measure, provided that $\al$ is small. In fact this will be consequence of a more
result for fibered hyperbolic polynomials.

The difficulties in proving our results are substantially different depending if one is considers perturbations
of a Misiurewicz or a hyperbolic  quadratic map. In the first case, the strategy of  \cite{Vi97} for the positivity of Lyapunov exponents Lebesgue almost everywhere is to consider the iteration of admissible curves (a class of curves 
preserved under iteration) and to prove that Lebesgue almost every point in each admissible curve has a slow 
recurrence condition to the critical region and thus has exponential growth of the derivative. The key 
combinatorial argument in \cite{Vi97} does not hold in our setting since any admissible curve is mapped onto
countably many of them. 
In our Proposition~\ref{prop:main1} we overcome this difficulty and obtain a statistical argument showing
that, up to consider a finite iteration $\varphi^N$ of the skew-product $\varphi$, the proportion of points
in any admissible curve whose iterate intersects a small neighborhood of the critical region is bounded by
the Lebesgue measure of the corresponding neighborhood.
Then, a large deviations argument shows that the measure of points in admissible curves that exhibit fast 
recurrence to a neighborhood of the critical region decrease subexponentially and, consequently, Lebesgue
almost every point has two positive Lyapunov exponents.
In the second case, corresponding to perturbation of hyperbolic quadratic maps, the problem can be understood
as a random composition of nearby quadratic maps and so we consider a more general setting of fibered 
nearby hyperbolic interval maps. 
The hyperbolicity of the Julia set for fibered polynomial maps in the complex variable setting has been studied by Jonsson~\cite{Jo97} and Sester~\cite{Se99}.  
Here we use a small neighborhood of the periodic atracting orbit to prove that there exists a trapping region 
and an attractor. The transversality of admissible curves implies that the attractor does not coincide with the 
continuation of the periodic attracting orbits, but using a uniforme contraction property one proves that it still 
supports a unique hyperbolic SRB measure. Roughly, the complement of the basin of attraction is formed by 
points whose random iteration of quadratic maps that does not intersect the trapping region. 
This paper is organized as follows. In Section~\ref{s.statements} we recall some definitions and state our main
results. Some preliminary results on one-dimensional dynamics and admissible curves are given along Section~\ref{sec:preliminaries}. Finally, the proofs of the main results are given on Sections~\ref{sec:positive.exp},~\ref{sec:proofs} and~\ref{sec:periodic}. 

\section{Statement of Main Results}\label{s.statements}

In this section we present the necessary  definitions to state our main results. 

\subsection{Setting}

Let $\cP=\{\om_i\}_{i\in S, S\subset \mathbb N_0}$ be an at most countable partition of the unit interval $(0,1]$ by subintervals and $g:(0,1]\to (0,1]$ be a $C^{3}$ piecewise differentiable map. We will say that $g$ is a \emph{Markov expanding} map if $g(\om_i)=(0,1]$ and the restriction $g\mid_{\om_i}$ is a 
$C^{3}$ diffeomorphism with a $C^3$ extension to the closure for any $i\in S$,  $|g'(\theta)|\ge d\ge 16$
for all $\theta\in (0,1]$, and there exists $K>0$ so that $|g''|\leq K |g'|^2$. The later is the so called R\`enyi condition,
which is a sufficient condition to obtain the bounded distortion property in Subsection~\ref{subsec:hyperbolicity}.
Throughout we assume that $\log |g'| \in L^1(\Leb)$. This is a natural assumption to obtain finite positive Lyapunov
exponent for $g$ and related with the size of smaller intervals of $\mathcal P$.
Now we introduce the family of skew-products of the space $(0,1]\times \mathbb R$ with 
countably many inverse branches.

\begin{definition}\label{def:generalized}
We say that a piecewise $C^3$ map $\varphi\colon (0,1]\times \mathbb R \to (0,1]\times \mathbb R$ is a \emph{generalized Viana map} if it is a skew-product given by 
\begin{equation*}
\begin{array}{cccc}
\varphi_\al: & (0,1]\times \mathbb R & \to & (0,1]\times \mathbb R \\
		& (\theta,x)& \mapsto & (g(\theta), f_\al(\theta,x))
\end{array}
\end{equation*}
where $g$ is a piecewise linear Markov expanding map on $(0,1]$ and  $f_\al(\theta,x)=a_0+\al\sin(2\pi\theta) -x^2$ for some 
$\al>0$ and parameter $a_0\in (0,2]$. 
\end{definition}

The assumptions on the parameter $a_0$ will be crucial. Recall that the quadratic map $h(x)=a_0-x^2$ 
is Misiurewicz provided that the critical point  is pre-periodic repelling.
It is not hard to check that there 
exists an interval $I_0\subset \mathbb [h^2(0), h(0)]$ such that $\varphi_\al((0,1]\times I_0) \subset (0,1]\times I_0$ 
for every $\al>0$ small enough. Then we define 
the attractor $\La=\La(\varphi_\al)$ for $\varphi_\al$ by
$$
\La(\varphi_\al) = \bigcap_{n\ge 0} \varphi_\al^n \left(  (0,1]\times I_0 \right)
$$
and consider the restriction $\varphi_\al |_{\La}$.
Let us mention that although it seems reasonable that some other classes of  infinitely branched 
interval expanding maps can be considered as base dynamics without the  R\`enyi assumption 
some condition on the decay of the size of the partition elements should be necessary (e.g. otherwise
could exist SRB measures without finite positive Lyapunov exponent). 

Observe that we defined generalized Viana maps as a class of skew-products. We shall consider in this space
the topology that we now describe. Assume, without loss of generality, that $S=\mathbb N_0$. 
Given $\vep>0$ we say that $\varphi: (0,1]\times \mathbb R \to (0,1]\times \mathbb R$ is 
\emph{$\vep$-$C^3$-close }to the skew-product $\varphi_\al$ above if 
$\varphi(\theta,x)=(\tilde g(\theta), \tilde f(\theta,x))$ is a piecewise $C^3$ map and satisfies:
\begin{itemize}
\item[(i)] $\varphi((0,1]\times I_0) \subset (0,1]\times I_0$, the map
	  $\tilde g$ is a Markov expanding map on $(0,1]$;
\item[(ii)] $(0,1]=\bigcup_{i\in S} \tilde \omega_i$ and $\tilde g_i\mid_{\tilde\omega_i}:\tilde\omega_i\to (0,1]$
	is a diffeomorphism that admits a $C^3$ extension to the boundary and 
	 is a Markov partition for $\tilde g$ 
\item[(iii)] the renormalized maps $R_i g: \om_i \times \mathbb R\to (0,1]$ 
	 and $R_i f: \om_i\times \mathbb R \to  \mathbb R$ given respectively by
	$$
	R_i g(\theta)= \tilde g \left(
			\tilde \theta_{i+1} + \frac{|\tilde\omega_i|}{|\tilde\omega_i|} (\theta -\theta_{i+1})
			\right)
		\quad\text{and}\quad
	R_i f(\theta,x)= \tilde f \left(
			\tilde \theta_{i+1} + \frac{|\tilde\omega_i|}{|\tilde\omega_i|} (\theta -\theta_{i+1}),x
			\right)
	$$
	satisfy $\sup_{x} \|g(\cdot)-R_i g(\cdot,x)\|_{C^3}<\vep$ and $\|f|_{\om_i\times \mathbb R}-R_i f\|_{C^3}<\vep$,
	where $|\cdot|$ denotes the Lebesgue measure of the interval $\omega_i$ with boundary points $\theta_i$ and 	$\theta_{i+1}$.
\end{itemize}

Let us make some comments on our assumptions. We will assume for notational simplicity that the partition $\cP$
is preserved under perturbations, in which case $C^3$ perturbation coincides with the usual notion for interval maps.
Condition (i) implies that the perturbed map
$\varphi : (0,1]\times \mathbb R\to (0,1]\times \mathbb R$ is a skew-product with countably many domains of invertibility over a Markov expanding map.
These are natural assumptions if one assumes the base dynamics to be induced map from some 
one-dimensional nonuniformly expanding map.
Condition (ii) implies the domains of invertibility of $\varphi$ to be close to those of 
$\varphi_\al$ and that the dynamics in each domain is $C^3$-close to the original one. So, these assumptions
require the map $\varphi$ to be close to $\varphi_\al$ from both the topological and the differentiable viewpoints. 
Clearly, this class of maps include the skew-products considered in \cite{Vi97}.
\subsection{Statement of results}\label{s.thermodynamics}

We are now in a position to state our main results. 

\begin{maintheorem}\label{thm:Main.Estimate}
Consider the skew-product $\varphi_\al: (0,1] \times I  \to  (0,1] \times I$ given by
$\varphi_\al(\theta,x)= (g(\theta), f_\al(\theta,x))$ and such that $h(x)=a_0-x^2$ is Misiurewicz.
Then there exists $c>0$ such that for any small $\al$ it holds
$$
\liminf_{n\to\infty} \frac1n \log \| D\varphi_\al^n(\theta,x) v\| \geq c >0
$$
for Lebesgue almost every $(\theta,x)$ and every $v\in\mathbb R^2\setminus\{0\}$. Moreover, there exists $\vep>0$ 
such that the same property holds for every $\varphi$ that is $\vep$-$C^3$-close to $\varphi_\al$.
\end{maintheorem}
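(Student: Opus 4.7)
The plan is to adapt Viana's admissible-curve machinery to the countable-branch Markov setting. First, observe that the base dynamics already produces one positive exponent for free: since $|g'| \ge d \ge 16$, the chain rule gives $\frac{1}{n}\log|(g^n)'(\theta)| \ge \log d$ everywhere. Because $\varphi_\al$ is a skew-product, $D\varphi_\al^n$ is upper triangular in the frame $(\partial_\theta, \partial_x)$, so the theorem reduces to (a) proving a uniform positive lower bound for $\liminf_n \frac{1}{n}\log|\partial_x f_\al^n(\theta,x)|$ at Lebesgue almost every point, and (b) controlling the off-diagonal term $\partial_\theta f_\al^n$ well enough that expansion transfers to arbitrary vectors $v \in \mathbb R^2 \setminus \{0\}$.

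Next I would introduce admissible curves as graphs $\{(\theta, X(\theta)) : \theta \in \om\}$ where $\om$ is a subinterval of a partition element of $\cP$ and $X$ has small $C^1$-norm and controlled $C^2$-norm. The image of an admissible curve under $\varphi_\al$ decomposes into countably many admissible subcurves, one for each $\om_i \in \cP$ meeting its projection; the R\`enyi condition gives the bounded distortion needed to propagate the $C^2$ bound on $X$. The heart of the argument is the statistical estimate of Proposition~\ref{prop:main1}: there is $N \in \N$ such that for every admissible curve $\ga$ and every small $\de > 0$,
$$
\Leb_\ga \bigl\{ p \in \ga \,:\, |\pi_x \varphi_\al^N(p)| < \de \bigr\} \le C \de .
$$
The countable-branch obstruction to Viana's original combinatorial proof is circumvented here by decomposing the pushforward of $\Leb_\ga$ over the $N$-cylinders of $g$; bounded distortion together with $\log|g'| \in L^1(\Leb)$ ensures that the total mass landing in any vertical slab is controlled by the slab's width, independently of how finely the countable branches partition $\ga$.

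With this estimate, a Borel--Cantelli / large-deviations argument in the spirit of \cite{Vi97} shows that for Lebesgue almost every $p$ on an admissible curve the fiber coordinate satisfies the slow-recurrence condition $|\pi_x \varphi_\al^n(p)| \ge e^{-\eta n}$ for all but a subexponentially small density of indices $n$. Since $h(x) = a_0 - x^2$ is Misiurewicz, Ma\~n\'e's hyperbolicity theorem gives uniform expansion of $h$ on points bounded away from $0$; by perturbativity this transfers to $\partial_x f_\al(\theta, \cdot)$ for small $\al$, and combined with the classical binding-period argument to absorb the expansion loss near each near-critical passage it yields $\liminf_n \frac{1}{n}\log|\partial_x f_\al^n(\theta,x)| \ge c' > 0$ on a full $\Leb_\ga$-measure subset. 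Because $(0,1] \times I_0$ is foliated by admissible curves (horizontal lines are trivially admissible), Fubini promotes the conclusion to Lebesgue-a.e. $(\theta,x) \in \La$.

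Finally, transferring the diagonal expansion to every nonzero $v$ uses the upper-triangular form of $D\varphi_\al^n$ together with a routine estimate on $\partial_\theta f_\al^n$ controlled via the same binding argument, so growth rates in the $\partial_x$ direction dominate. Robustness under $\vep$-$C^3$-perturbations is automatic because admissibility, bounded distortion, integrability of $\log|\tilde g'|$, and hyperbolicity of $\tilde f(\theta,\cdot)$ away from the critical locus are all open conditions in the topology defined in Section~\ref{s.statements}. I expect the main obstacle to be precisely the statistical estimate of Proposition~\ref{prop:main1}: uniformly controlling the $\Leb_\ga$-measure of the preimage of a small critical neighborhood when $\varphi_\al^N(\ga)$ consists of countably many pieces of wildly different sizes is the step where the countable Markov structure requires a genuinely new argument beyond the combinatorial counting of \cite{Vi97}.
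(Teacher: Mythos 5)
Your overall architecture (admissible curves, a statistical recurrence estimate for a finite iterate, large deviations plus Borel--Cantelli, Viana's binding estimates for the fiber expansion, Fubini over horizontal lines, and the trivial transfer to non-vertical $v$ via $|(g^n)'|\ge d^n$) matches the paper's. But the key step is misstated in a way that breaks the argument. You claim that for some fixed $N$ and every small $\de$ one has $\Leb_\ga\{p: |\pi_x\varphi_\al^N(p)|<\de\}\le C\de$, justified by ``decomposing over $N$-cylinders; bounded distortion ensures the mass landing in any vertical slab is controlled by the slab's width.'' This cannot be right: $\varphi_\al^N(\ga)$ is a countable union of admissible curves, each a nearly horizontal graph with slope at most $\al$ and vertical oscillation of order $\al$, so the transversality estimate (Lemma~\ref{le:transversality} / Corollary~\ref{cor:iteration}) only yields a bound of order $\de/\al+\sqrt{\de/\al}$ \emph{per curve}, and summing over cylinders with bounded distortion reproduces $C\sqrt{\de/\al}$ --- a bound that blows up as $\al\to 0$ and is certainly not linear in $\de$. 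Bounded distortion and $\log|g'|\in L^1$ (which the paper uses only to guarantee a finite base exponent) do not remove the $\al$-dependence.

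What the paper actually proves (Proposition~\ref{prop:main1}) is weaker and differently shaped: only for \emph{deep} returns, $r\ge(\tfrac12-2\eta)\log\tfrac1\al$, i.e.\ $\de\lesssim\sqrt{\al}\,\al^{\frac12-2\eta}$, does one get $\Leb\{\theta:\hat Y_M(\theta)\in\hat J(r-2)\}\le \tilde C e^{-5\beta r}$ with constants independent of $\al$, and this suffices for the large-deviations step because shallow returns are already handled by Corollary~\ref{cor:iteration}. The mechanism is not distortion but a vertical-displacement/positive-frequency argument: at each of $k(r)\ge\ga_1 r$ symbolic times $t_i$ where the fiber derivative has dropped by a definite factor, there are two disjoint collections of branches, each of base measure at least $\zeta=\tfrac1{16}$, whose images are vertically separated by at least $\al/100$; hence at most a $(1-\zeta)^{k(r)}$-proportion of itineraries can keep the curve inside the slab $\hat J(r-2)$. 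You correctly identify this as ``the step where the countable Markov structure requires a genuinely new argument,'' but your proposal does not supply that argument, and the estimate you substitute for it is false as stated. Everything downstream (the slow-recurrence condition, the binding-period summation, robustness) is fine in outline but rests on this missing piece.
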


As a byproduct of the proof we obtain some estimates on the decay of the first time at which some hyperbolicity is
obtained. These are known as hyperbolic times (see \cite{Al00} for some details). In consequence, one can 
use the works of Ara\'ujo, Solano~\cite{ArS11} or Pinheiro~\cite{Pi11} to build an inducing scheme 
and deduce the existence of an SRB measure with good statistical properties. 
Recall that a $\varphi$-invariant and ergodic probability measure $\mu$ is an \emph{SRB measure} if its 
basin of attraction 
$$
B(\mu)
=\Big\{(\theta,x)\in (0,1]\times I_0 : \frac1n\sum_{j=0}^{n-1} \de_{\varphi^j(\theta,x)}  \xrightarrow[]{w^*} \mu \Big\}
$$
has positive Lebesgue measure. Here we establish not only uniqueness of the SRB measure as we obtain 
several important statistical properties. Let $\mathcal H_\beta$ be denote the space of $\beta$-H\"older continuous observables.
We obtain the following:

\begin{maintheorem}\label{thm:SRBmeasure}
Let $\varphi_\al: (0,1] \times I  \to  (0,1] \times I$ be a generalized Viana map 
$\varphi_\al(\theta,x)= (g(\theta), f_\al(\theta,x))$ such that $h(x)=a_0-x^2$ is Misiurewicz.
Then, for any small $\al>0$:
\begin{enumerate}
\item  $\varphi_\al$ is topologically mixing;
\item There exists a unique $\varphi_\al$-invariant measure $\mu_\al$ that is absolutely continuous 
	with respect to Lebesgue on the attractor $\Lambda(\varphi_\al)$; 
\item $\mu_\al$ has stretched-exponential decay of correlations, that is, there exists $C>0$ and $\tau\in(0,1)$
	such that
	$$
	\left|\int (h_1 \circ \varphi_\al^n) h_2 \, d\mu_\al- \int h_1 \, d\mu_\al . \int h_2 \, d\mu_\al \right|
		\leq C e^{-\tau \sqrt{n}}\|h_1\|_\infty \|h_2\|_\beta
	$$
	for all large $n$ and observables $h_1\in L^\infty(\mu)$ and $h_2\in \cH_\beta$;
\item $\mu_\al$  has stretched-exponential large deviations, meaning that there exists $\zeta\in (0,\frac12)$ such that 
	for all $\de>0$ and $h \in \cH_\beta$ there exists $\gamma>0$ satisfying
	$$
	\mu\Big( \Big|\frac1n \sum_{j=0}^{n-1} h\circ \varphi^j - \int h \, d\mu_\al \Big|>\de \Big)
		\leq e^{-\gamma n^{\zeta}}
		\text{  for all large $n$;}
	$$
\item $\mu_\al$ satisfies the central limit theorem, the almost sure invariance principle, 
 	the local limit theorem and the Berry-Esseen theorem for H\"older observables 
\end{enumerate}
Furthermore, all these properties hold for every $\varphi$ that is $C^3$-close enough to $\varphi_\al$.
\end{maintheorem}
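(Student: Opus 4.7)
\medskip
\noindent\textbf{Proof proposal.}
The plan is to deduce all five items from a single Markov inducing scheme, whose construction rests on the hyperbolic time estimates that are produced along the proof of Theorem~A. The scheme is then fed into the standard Young-tower/Gou\"ezel machinery.

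First, for topological mixing I would argue separately in the base and in the fibre. Since $g$ is a full Markov expanding map with every cylinder being mapped onto $(0,1]$, iterates of any rectangle project onto $(0,1]$ after one step. In the fibre, the Misiurewicz quadratic map $h$ is topologically mixing on $[h^2(0),h(0)]$ and this mixing is stable under the small $C^3$ perturbation governed by $\al\sin(2\pi\te)$. Combining both with the admissible curve formalism from Section~\ref{sec:preliminaries} (iterates of an admissible curve spread by expansion in the $\te$-direction and are moved transversally in $x$) one shows that for large enough $n$, $\varphi_\al^n$ of any open set in $\La$ covers the attractor.

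Second, from the proof of Theorem~A one extracts the key quantitative output: a stretched-exponential estimate on the first hyperbolic time $n_1$, namely $\Leb(\{(\te,x):n_1(\te,x)>n\})\le C\,e^{-\gamma\sqrt{n}}$, which in our setting is just the large deviation bound used to upgrade Proposition~\ref{prop:main1} to Lebesgue-a.e. positivity of Lyapunov exponents. I would then invoke Pinheiro's scheme from~\cite{Pi11} (or, alternatively, the Ara\'ujo--Solano construction from~\cite{ArS11}) to build a full Markov induced map $F=\varphi_\al^R$ on a reference set $\Delta\subset\La$, with uniformly expanding and bounded distortion branches, the Markov property being inherited from the skew-product structure together with the Markov property of $g$. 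The tails $\Leb(\{R>n\})$ are then dominated by those of $n_1$, hence are stretched exponential.

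Third, items~(2)--(5) follow from the general theory applied to such an inducing scheme. The induced map has a unique ACIM with exponential decay of correlations by the Lasota--Yorke/spectral gap method, and saturating over $R$ yields a $\varphi_\al$-invariant measure $\mu_\al$ absolutely continuous on $\La(\varphi_\al)$; uniqueness follows from the topological mixing established above together with the fullness of the Markov structure (so that the basin of any ergodic component has positive Lebesgue measure, forcing a single component). Stretched-exponential decay of correlations against $L^\infty\times \cH_\be$ observables is then a consequence of Gou\"ezel's results on Young towers with sub-exponential tails; the stretched-exponential large deviations in~(4) follow from Melbourne--Nicol type arguments applied to the same tower; finally, the CLT, ASIP, LLT and Berry--Esseen theorem for H\"older observables are standard outputs of the spectral/martingale approach to Gibbs--Markov induced maps, once the tail decay is strong enough to ensure finite variance and control of the characteristic function. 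The stability statement under $\vep$-$C^3$ perturbations follows because the entire construction, from admissible curves up to the tail estimate on $R$, is uniform on a $C^3$-neighbourhood of $\varphi_\al$, as already noted in Theorem~A.

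The main obstacle is the inducing step: constructing a Markov induced map in a two-dimensional setting where the base dynamics has countably many branches and the fibre dynamics has a critical region. One has to control the geometry of hyperbolic preballs across infinitely many cylinders of $\cP$ and simultaneously avoid the critical set along the return orbit, while keeping bounded distortion. It is precisely this coupling that prevents an exponential tail and forces the stretched-exponential rate $e^{-\gamma\sqrt{n}}$, but the rate is good enough to drive the whole statistical package listed in the theorem.
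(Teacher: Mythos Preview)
Your proposal is essentially correct and follows the same route as the paper: extract stretched-exponential tail estimates on hyperbolic times from the proof of Theorem~A, feed these into a Young-tower/Gou\"ezel inducing scheme, and then read off items~(2)--(5) from the general machinery (\cite{Gou06} for decay of correlations, \cite{ALFV11,MN08} for large deviations and the limit theorems); topological mixing/exactness is handled separately by an argument in the style of~\cite{AV02}.

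There is one point the paper treats explicitly that you gloss over. You correctly identify slow recurrence to the critical region $\mathcal C$ as the input needed for the inducing construction, and note that the countably many branches are an obstacle, but you do not say how to handle slow recurrence to the \emph{discontinuity set} $\mathcal D$ (the countably many vertical segments coming from $\partial\cP$). The paper does not bundle this with the critical-set estimate: instead it observes that, because $\varphi$ is a skew product, the recurrence function $\cR_{h,\vep,\de}$ to $\mathcal D$ depends only on the $\theta$-coordinate and hence reduces to a large-deviation estimate for the Markov expanding base map $g$. Since $g$ has a unique absolutely continuous invariant measure equivalent to Lebesgue with exponential decay of correlations, one gets exponential large deviations for H\"older observables and, via~\cite{AS13}, the required sub-exponential tail for $\cR_{h,\vep,\de}$. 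Only after combining this with the critical-set tail does the paper invoke~\cite{Gou06}. Your proposal would go through once you insert this step; without it, the inducing schemes of~\cite{Pi11,ArS11} give existence of the absolutely continuous invariant measure but not immediately the stretched-exponential tail on the return time~$R$ that drives items~(3)--(5).
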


Our strategy to deduce the later ergodic properties is to use recent contributions to the study of 
stretched-exponential large deviations and limit theorems using Markov induced maps e.g. by Melbourne and 
Nicol~\cite{MN08}, Alves, Luzzatto, Freitas, Vaienti \cite{ALFV11} or Alves and Schnelmann \cite{AS13}. 
Our next main result concerns the coexistence of a dense set of points with a negative and a positive
Lyapunov exponents together with a full Lebesgue measure set of points with only positive Lyapunov exponents.  

\begin{maintheorem}\label{thm:generic.skew}
Let $\varphi_\al: (0,1] \times I  \to  (0,1] \times I$ be a generalized Viana map 
$\varphi_\al(\theta,x)= (g(\theta), f_\al(\theta,x))$ such that $h(x)=a_0-x^2$ is Misiurewicz and
let $\mathcal V$ be a $C^3$-open set of generalized Viana maps. Then there exists an open and dense 
subset $\cA\subset \mathcal V$ such that for every $\varphi \in\cA$
\begin{enumerate}
\item there is a dense set of points $D\subset \La(\varphi)$ with a negative Lyapunov exponent,
	that is,
	$$
	\limsup_{n\to\infty} \frac1n \log \Big\|D\varphi^n(\theta,x) \frac{\partial}{\partial x}\Big\|<0
		\quad \text{for all } (\theta,x) \in D;
	$$ 
\item Lebesgue almost every point in $\La(\varphi)$ has two positive Lyapunov exponents.
\end{enumerate}
\end{maintheorem}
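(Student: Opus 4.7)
My plan starts from the observation that item~(2) is automatic: the open set $\mathcal V$ may be assumed to lie inside the $C^3$-neighborhood of $\varphi_\alpha$ supplied by Theorem~A, so every $\varphi\in\mathcal V$ already has Lebesgue-a.e.\ point with two positive Lyapunov exponents. The content is item~(1). I would produce an open and dense subset $\mathcal A\subset\mathcal V$ whose members admit at least one central-attracting periodic orbit, and then use the topological mixing of $\varphi|_{\Lambda}$ (Theorem~B~(1)) to spread the negative-exponent property densely. There is no conflict with item~(2), since for a skew-product the set of points actually attracted to such an orbit is a countable union of vertical pieces of curves and therefore has Lebesgue measure zero.

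Define
\[
\mathcal A \;=\; \{\varphi\in\mathcal V : \exists\, N\geq 1,\ (\theta_0,x_0)\in\Lambda(\varphi)\ \text{with}\ \varphi^N(\theta_0,x_0)=(\theta_0,x_0),\ |\partial_x f^N(\theta_0,x_0)|<1\}.
\]
Openness of $\mathcal A$ is the implicit function theorem applied to $\varphi^N(p)=p$ together with the $C^1$-openness of the attracting condition. For density, given $\varphi_0\in\mathcal V$ and $\varepsilon>0$, I would shift the parameter $a_0$ to a nearby hyperbolic parameter $\tilde a$ of the real quadratic family $h_a(x)=a-x^2$; such $\tilde a$ are dense in $(0,2]$ by Graczyk--\'Swi\c atek~\cite{GS97} and Lyubich~\cite{Lyu97}, and the perturbed map $\tilde\varphi(\theta,x)=(g(\theta),\tilde a+\alpha\sin(2\pi\theta)-x^2)$ remains in $\mathcal V$ for $|\tilde a-a_0|$ small. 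Let $y^*$ be an attracting periodic point of $h_{\tilde a}$ of period $p$. Since $g$ is Markov expanding, I select a $g$-periodic point $\theta_0$ of period $N=kp$ with $k$ large, and consider the fiber polynomial
\[
F_N(x)\;=\;\tilde f(\theta_{N-1},\tilde f(\theta_{N-2},\ldots,\tilde f(\theta_0,x))),
\]
which is a $C^3$-$O(\alpha)$ perturbation of the autonomous iterate $h_{\tilde a}^N$. The latter fixes $y^*$ with derivative $((h_{\tilde a}^p)'(y^*))^k$, exponentially small in $k$, so the inverse function theorem produces an attracting fixed point $x^*$ of $F_N$ near $y^*$. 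This yields an attracting $\varphi^N$-periodic orbit $(\theta_0,x^*)$ for $\tilde\varphi$, hence $\tilde\varphi\in\mathcal A$.

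To obtain the dense set $D$ for $\varphi\in\mathcal A$, let $(\theta_0,x_0)$ be the central-attracting periodic orbit of period $N$ and $U$ a neighborhood of it contained in the local stable set of the fiber dynamics. Since the bundle $\mathbb R\cdot\partial/\partial x$ is $D\varphi$-invariant, the chain rule gives $\partial_x f^n(\theta,x)=\prod_{j=0}^{n-1}\partial_x f(\varphi^j(\theta,x))$, so any point whose forward orbit is eventually captured by $U$ inherits the contraction rate of the periodic orbit,
\[
\limsup_{n\to\infty}\tfrac{1}{n}\log|\partial_x f^n(\theta,x)|\;\leq\;\tfrac{1}{N}\log|\partial_x f^N(\theta_0,x_0)|\;<\;0.
\]
Topological mixing of $\varphi|_{\Lambda}$ (Theorem~B~(1)) makes $\bigcup_{n\geq 0}\varphi^{-n}(U)$ dense in $\Lambda(\varphi)$, so the set $D$ of points with negative central exponent is dense, proving~(1).

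The main obstacle I anticipate is the density step: quantifying that the non-autonomous fiber composition $F_N$ genuinely inherits an attracting orbit from the autonomous iterate $h_{\tilde a}^N$ despite the $O(\alpha)$ modulation introduced by $\alpha\sin(2\pi\theta)$ along the base orbit, uniformly in the chosen periodic base point $\theta_0$. Taking $N=kp$ with $k$ large supplies the margin, since the model contraction $|(h_{\tilde a}^p)'(y^*)|^k$ decays geometrically in $k$ while the non-autonomous perturbation enters only additively along the composition. Verifying that the parameter shift $a_0\to\tilde a$ preserves all structural requirements defining $\mathcal V$ (Markov base, invariance of $(0,1]\times I_0$, skew-product form) is then routine for $|\tilde a-a_0|$ small.
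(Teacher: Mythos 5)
Your overall strategy --- exhibit, for an open dense set of maps, a fiber-attracting periodic saddle and then spread the negative central exponent along a dense set of preimages --- is the same as the paper's, and your treatment of item (2) and of the openness of $\cA$ is fine. But the density step, which you yourself flag as the main obstacle, has a genuine gap. You perturb $a_0$ to a nearby hyperbolic parameter $\tilde a$ and then try to extract an attracting fixed point of the non-autonomous composition $F_N$ over a $g$-periodic point of high period $N=kp$, treating $F_N$ as an $O(\al)$ perturbation of $h_{\tilde a}^N$. The problem is that $\al$ is fixed by the given map, while $\tilde a$ must be taken close to the Misiurewicz parameter $a_0$ to stay in $\mathcal V$; as $\tilde a\to a_0$ the attracting cycle of $h_{\tilde a}$ has period tending to infinity, multiplier and immediate basin degenerating. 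The additive modulation $\al\sin(2\pi\theta_j)$ enters at \emph{every} step of the composition, so once the immediate basin of $y^*$ is smaller than $\al$ the orbit is ejected and no fixed point of $F_N$ survives near $y^*$; taking $k$ large only shrinks the model multiplier, it does not enlarge the basin. Indeed, Theorem~A is precisely the statement that for these non-autonomous compositions the $O(\al)$ modulation wins: Lebesgue-a.e.\ fiber orbit is expanding. The paper sidesteps this entirely by restricting to the fiber over a \emph{fixed} point $p_*$ of $g$: that fiber is $\varphi$-invariant, so the fiber dynamics there is a single autonomous unimodal map $\varphi_{p_*}=f(p_*,\cdot)$, and Kozlovski--Shen--van Strien (Theorem~\ref{thm:KSV}) makes hyperbolicity of that one interval map open and dense; no non-autonomous perturbation estimate is needed. (Your use of Graczyk--\'Swi\c atek/Lyubich also silently assumes $\varphi_0$ is exactly of the form $\varphi_\al$; for a general $\varphi_0\in\mathcal V$ the fiber map is a general unimodal map and the full KSV theorem is required.)

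A second, smaller gap is in the spreading step. The stable set of a fiber-attracting periodic saddle of a skew product over an expanding base is a countable union of vertical segments, not an open set, so "topological mixing applied to a neighborhood $U$ contained in the local stable set" does not parse. What actually gives density of $D$ in $\La(\varphi)$ is that (a) the pre-orbit of $p_*$ under $g$ is dense in $(0,1]$, and (b) the basin of the periodic attractor of the \emph{hyperbolic} unimodal map $\varphi_{p_*}$ is open, dense and of full Lebesgue measure in the invariant fiber, so its pullback under the non-singular fiber maps is dense in each fiber over the pre-orbit. Your construction only yields an attracting fixed point of $F_N$ with \emph{some} nonempty open basin in its fiber, which is not enough to conclude that $D$ is dense in all of $\La(\varphi)$ rather than in some open piece of it. Both gaps are repaired by adopting the paper's reduction to the invariant fiber over the fixed point of the base map.
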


In view of the previous theorem an interesting question is to understand if, at least generically, all 
Lyapunov exponents are bounded away from zero.
The later goes in the direction of understanding possible phase transitions for the topological
pressure $P(t)$ of the generalized Viana-map $\varphi_\al$ with respect to the family of potentials  
$\psi_{\al,t}= -t \log |\partial_x \varphi_\al|$, that is, parameters $t$ 
such that  $\psi_{\al,t}$ has none or more than one equilibrium state.
For that purpose it would be important to characterize the range of the entropy 
among invariant and ergodic measures with one negative Lyapunov exponent. 

This final part of the section is devoted to a better understanding of the dynamics of quadratic skew-products  
where the parameters are driven among hyperbolic parameters. In general, to obtain hiperbolicity for the 
composition of fibered polynomials is a hard question. Jonsson~\cite{Jo97} and Sester~\cite{Se99} studied this
topic in the complex variable setting and, in particular,  obtained conditions equivalent to hyperbolicity.  With this in 
mind we state our last main result. For simplicity, we will say that an ergodic measure is \emph{hyperbolic} if is has only nonzero Lyapunov exponents and not all of the same sign.

\begin{maintheorem}\label{thm:hiperbolicidade.fibrada}
Let $I$ denote a closed interval. Assume that $X$ is a compact Riemannian manifold, that a continuous
map $S:X \to X$ admits a unique SRB measure $\mu_S$, that $T: I \to I$ is a hyperbolic polynomial with 
negative Schwarzian derivative and $a: X \to \mathbb R$ is  a $C^3$-smooth function. There exists
$\vep>0$ such that if $\|a\|_{C^3}<\vep$ then the skew product 
$$
\psi : (x,y)\mapsto (S(x), T(y)+a(x))
$$
is such that $\psi$ has an ergodic SRB measure $\nu$ whose basin of attraction contains Lebesgue almost every
 point in some open and proper subset of $X\times I$. If, in addition, $\mu_S$ is hyperbolic then $\nu$ is also hyperbolic.
\end{maintheorem}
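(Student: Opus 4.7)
The plan is to exploit the hyperbolicity of $T$ to produce a trapping region for $\psi$, to build a $\psi$-invariant continuous graph over $X$ inside it by a uniform contraction argument, and then to transfer the base SRB measure $\mu_S$ onto this graph. Since $T$ is a hyperbolic polynomial with negative Schwarzian derivative, one-dimensional theory produces a (super)attracting periodic orbit $\cO=\{p_1,\dots,p_k\}$ whose basin is Lebesgue-full in its immediate basin. I will pick a small open neighborhood $U$ of $\cO$ consisting of short intervals around the $p_j$ such that $\overline{T(U)}\subset\interior(U)$ and $|(T^k)'|\le\rho<1$ uniformly on $U$. For $\vep$ small enough, every fiber map $y\mapsto T(y)+a(x)$ still sends $U$ strictly into itself, uniformly in $x$, so $V:=X\times U$ is an open, proper, forward $\psi$-invariant subset of $X\times I$ and the attractor for $\psi$ inside $V$ is $\La:=\bigcap_{n\ge 0}\psi^n(\overline V)$.

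Next I would apply a standard graph-transform argument on the complete metric space of continuous sections $\gamma:X\to U$ with the sup distance. Because $\psi^k$ is fiberwise a uniform contraction of rate comparable to $\rho$, the Banach fixed-point theorem yields a unique continuous section $\gamma_*:X\to U$ whose graph is $\psi$-invariant, and every point of $V$ synchronizes with this graph at exponential speed,
\[
|\pi_I\psi^n(x,y)-\gamma_*(\pi_X\psi^n(x,y))|\le C\rho^{n/k}|y-\gamma_*(x)|.
\]
The map $h:X\to X\times I$ defined by $h(x)=(x,\gamma_*(x))$ conjugates $S$ with the restriction of $\psi$ to the graph, so $\nu:=h_*\mu_S$ is $\psi$-invariant and ergodic. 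Using the synchronization estimate, Birkhoff averages of a continuous observable along a $\psi$-orbit of $(x,y)\in V$ differ from those along the orbit of $h(x)$ by a term that tends to zero, and combining this with the SRB property of $\mu_S$ yields Lebesgue-a.e.\ convergence to $\nu$ on $V$ through a Fubini argument. Uniqueness of $\nu$ as an SRB supported inside $V$ follows from uniqueness of $\mu_S$.

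For the hyperbolicity clause, the vertical Lyapunov exponent of $\nu$ equals $\int \log|T'(\gamma_*(x))|\, d\mu_S(x)$, which is strictly negative because $\gamma_*$ takes values in the uniformly contracting region $U$; if in addition $\mu_S$ is hyperbolic, the conjugation $h$ transports the nonzero exponents of $S$ at $\mu_S$-a.e.\ point into the base directions of $D\psi$ at $\nu$-a.e.\ point, so $\nu$ exhibits at least one positive and one negative Lyapunov exponent. The principal technical obstacle lies in the last part of Step~3, namely lifting $\mu_S$-a.e.\ equidistribution on the base to Lebesgue-a.e.\ equidistribution on $V$: one has to verify that the synchronization rate $\rho^{n/k}$ is fast enough to control the comparison between Birkhoff averages along $(x,y)$- and $h(x)$-orbits, so that Fubini over the fiber $U$ legitimately transports the Lebesgue-full SRB basin of $\mu_S$ into a Lebesgue-full subset of $V$ contained in the basin of $\nu$.
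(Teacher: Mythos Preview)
Your trapping-region construction and the hyperbolicity clause are correct and match the paper. The gap is the graph-transform step. The base map $S$ is only assumed to be continuous with a unique SRB measure, and in the principal application (Corollary~\ref{cor:hyperbolicity.Viana}) it is the non-invertible endomorphism $\theta\mapsto d\theta\pmod 1$. For the forward graph transform $\Gamma(\gamma)(S(x))=T(\gamma(x))+a(x)$ to define a section over $X$ one needs $S$ to be injective; otherwise two preimages $x_1\neq x_2$ of the same point would be forced to satisfy $T(\gamma(x_1))+a(x_1)=T(\gamma(x_2))+a(x_2)$, which fails generically. The backward version $\Gamma(\gamma)(x)=T_x^{-1}(\gamma(S(x)))$ is no help: since $T_x$ contracts on $U$, its local inverse expands, so $\Gamma$ is not a contraction on sections. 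In fact the attractor $\bigcap_{n\ge0}\psi^n(\overline V)$ is typically \emph{not} a graph over $X$ when $S$ is a genuine endomorphism (the paper makes this remark explicitly), so the section $\gamma_*$ and the measure $h_*\mu_S$ you propose do not exist.

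The paper avoids this by never seeking an invariant graph. It fixes an arbitrary $y_0\in U_i$, forms the Ces\`aro averages
\[
\nu_n=\frac1n\sum_{j=0}^{n-1}\psi^j_*(\mu_S\times\delta_{y_0}),
\]
and uses only the fiberwise contraction to show that $(\nu_n)$ converges and that the limit is independent of $y_0$. This is precisely the part of your argument that survives: any two points $(x,y_1),(x,y_2)$ in the same fiber of $V$ have $\psi$-orbits with the same base trajectory and exponentially close fiber coordinates, hence identical Birkhoff limits for continuous observables; combined with the Lebesgue-full basin of $\mu_S$ and Fubini over the fiber, this yields the SRB basin for $\nu$ on $X\times U_i$. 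Your outline is therefore repairable by deleting the graph and working directly with the synchronization estimate between $(x,y)$ and $(x,y_0)$.
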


Let us mention that it is not clear to us wether the transversality condition of admissible curves can be
used to prove that the hyperbolic SRB measure is absolutely continuous with respect to Lebesgue. 
Related results were obtained by Tsujii~\cite{Tsu01} and Volk~\cite{Vo11}.
Finally, the previous result applies for different coupling functions $S$ as rotations, $C^2$-Maneville-Pommeau maps and, in particular, directly for Viana maps whose parameters are driven along hyperbolic one as follows. 

\begin{maincorollary}\label{cor:hyperbolicity.Viana}
Consider the skew-product $\varphi_\al: S^1 \times I  \to  S^1 \times I$ given by
$\varphi_\al(\theta,x)= (g(\theta), f_\al(\theta,x))$, where $g(\theta)=d\theta (\!\!\mod 1)$ and
$f_\al(\theta,x)=a_0+\al \, \sin(2\pi\theta)-x^1$ for $a_0$ so that  $h(x)=a_0-x^2$ is hyperbolic.
Then for every small $\al>0$ the map $\varphi_{\al}$ has an SRB  measure $\nu$, it is hyperbolic 
and supported in a proper attractor.
Moreover, the same property holds for every $\varphi$ that is $C^3$-close enough to $\varphi_\al$.
\end{maincorollary}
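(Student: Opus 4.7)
The plan is to deduce the corollary directly from Theorem~\ref{thm:hiperbolicidade.fibrada} by a routine verification of its hypotheses, followed by a perturbative argument to handle the $C^3$-stability clause. I take $X = S^1$ and $S = g$, with $g(\theta) = d\theta \pmod 1$ the standard expanding circle map. This $g$ is $C^\infty$, uniformly expanding with $|g'|=d\ge 2$, and admits Lebesgue measure as its unique SRB measure $\mu_S$; moreover $\mu_S$ is hyperbolic with Lyapunov exponent $\log d > 0$. I take $T(y)=a_0-y^2$ and $I=[h^2(0),h(0)]$; then $T$ is a polynomial with negative Schwarzian derivative, and the standing assumption that $h$ is hyperbolic gives that $T$ is a hyperbolic polynomial in the sense required. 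Finally the coupling term $a(\theta) = \al\sin(2\pi\theta)$ is $C^\infty$ with $\|a\|_{C^3}\le C\al$, so choosing $\al$ small enough makes $\|a\|_{C^3}<\vep$ for the $\vep$ furnished by Theorem~\ref{thm:hiperbolicidade.fibrada}. Applying that theorem yields an ergodic SRB measure $\nu$ for $\varphi_\al$ whose basin has positive Lebesgue measure in an open proper subset of $S^1\times I$. Since $\mu_S$ is hyperbolic, the last clause of Theorem~\ref{thm:hiperbolicidade.fibrada} upgrades $\nu$ to a hyperbolic SRB measure.

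The properness of the attractor supporting $\nu$ is recorded already by Theorem~\ref{thm:hiperbolicidade.fibrada}: the basin is a proper subset of $S^1\times I$, so the topological attractor, obtained by intersecting forward iterates of a trapping region built from a neighborhood of the attracting periodic orbit of $T$, is strictly contained in $S^1\times I_0$. Hyperbolicity of $\nu$ decomposes naturally along the skew-product structure: the positive Lyapunov exponent is inherited from the base factor $\mu_S$ under the projection $S^1\times I\to S^1$, while the fiberwise negative exponent comes from the uniform contraction of $T$ in a neighborhood of its attracting cycle, which persists uniformly in $\theta$ once $\al$ is small.

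For the robustness clause I consider $\varphi(\theta,x) = (\tilde g(\theta), \tilde f(\theta,x))$ that is $C^3$-close to $\varphi_\al$. The base $\tilde g$ remains uniformly expanding and thus still admits a unique hyperbolic SRB measure $\mu_{\tilde g}$ close to Lebesgue, while each fiber map $\tilde T_\theta(x):=\tilde f(\theta,x)$ is $C^3$-close to $T$ uniformly in $\theta$. The two crucial ingredients of Theorem~\ref{thm:hiperbolicidade.fibrada}, namely the existence of a trapping region $U\Subset S^1\times I_0$ and uniform fiberwise contraction inside $U$, are $C^3$-open conditions: $\varphi_\al(U)\Subset U$ implies $\varphi(U)\Subset U$ by continuity, and hyperbolicity of $T$ on its attracting cycle propagates by structural stability of hyperbolic polynomial maps to all $\tilde T_\theta$ with $\theta$-uniform constants. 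Thus the same construction produces a hyperbolic SRB measure for $\varphi$ supported on a proper attractor.

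The main obstacle I anticipate is the last step: Theorem~\ref{thm:hiperbolicidade.fibrada} is stated for the rigid additive skew-product $(S(x),T(y)+a(x))$, whereas a general $C^3$-perturbation produces fiber maps $\tilde T_\theta$ whose dependence on $\theta$ is not merely a translation. The careful point is therefore to trace through the proof of Theorem~\ref{thm:hiperbolicidade.fibrada} and verify that every estimate used (the trapping property, the uniform fiberwise contraction, the transversality of admissible curves preventing the attractor from collapsing onto a graph, and the construction of the SRB measure via pushforwards of Lebesgue on admissible curves) depends only on $C^3$-smallness of the deviation from the unperturbed map and not on the special additive form, which I expect to follow since all bounds in the proof are expressed in terms of $C^3$-norms.
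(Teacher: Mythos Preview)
Your proposal is correct and matches the paper's approach: the corollary is presented there as an immediate application of Theorem~\ref{thm:hiperbolicidade.fibrada}, with no separate proof given, and your verification of the hypotheses ($X=S^1$, $S=g$ with Lebesgue as unique hyperbolic SRB, $T(y)=a_0-y^2$ hyperbolic with negative Schwarzian, $a(\theta)=\al\sin(2\pi\theta)$ with small $C^3$-norm) is exactly the intended routine check. Your observation that the robustness clause requires tracing through the proof of Theorem~\ref{thm:hiperbolicidade.fibrada} rather than invoking its statement directly (since a general $C^3$-perturbation need not be of the additive form $(S(x),T(y)+a(x))$) is a point the paper leaves implicit, and your sketch of why the argument still goes through is sound.
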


Let us mention that the SRB measure above is supported on a  topological atractor $\cG\subset (0,1]\times I_0$
as described in detail in Section~\ref{sec:SRBs}. Finally, some interesting questions are to understand 
if the complement $K$ of the attractor $\mathcal G$ is such that the intersection $K_\theta$ with the fiber over 
$\theta$ is a totally disconnected, compact set of zero Lebesgue measure. For that it would be necessary to
prove that $K$ is an expanding set for $\varphi_\al$.
Moreover, since almost every parameter is regular or stochastic for the quadratic map 
it would be interesting to understand wether either of Theorem~A or Corollary~\ref{cor:hyperbolicity.Viana} 
hold for quadratic skew-products and Lebesgue almost every parameter $a\in(0,2]$.

\subsection{Some applications}
Let us finish this section with some examples. 

\begin{example}[Viana maps]
The class of maps considered in~\cite{Vi97} fit in the previous setting. In fact, assume $d\geq 16$ and take
the  Markov expanding map on $(0,1]$ given by $g(\theta)=d\theta-[d\theta]$ (where $[\cdot]$ stands for the integer part). Then $g$ admits a $C^3$-extension to the boundary elements and one can identify the boundary points of $(0,1]$ to obtain the $C^3$ expanding map on the circle ${\mathbb S}^1$ given by $g(\theta)=d\theta (\!\!\!\mod 1)$,
thus recovering the previous setting. 
In particular, in this context Theorems~\ref{thm:Main.Estimate} and~\ref{thm:SRBmeasure} 
are consequences of \cite{Vi97,Al00}. 

In this context, it follows from Theorems~\ref{thm:generic.skew}  that $C^1$-generic transformations in the $C^3$
neighborhood of Viana maps exhibit coexistence of a dense set of points with one negative Lyapunov exponent  
while Lebesgue almost every point has only positive Lyapunov exponents. 
Finally, it follows from Theorem~\ref{cor:hyperbolicity.Viana} that quadratic skew-products with parameters 
driven among hyperbolic ones admit a unique SRB measure, it is hyperbolic and the complement of its basin
of attraction is an expanding Cantor set of lines.
\end{example}

In the next class of examples we present a robust class of Markov expanding maps with discontinuities and 
infinitely many invertibility domains.

\begin{example}[Quadratic skew-products over piecewise linear expanding maps]
Let $\cP$ be an arbitrary countable partition of the unit interval $(0,1]$ in subintervals $(\om_i)_{i\in S}$
with size smaller or equal to $\frac1d$ and let $g_0$ be piecewise linear satisfying $g(\om_i)=(0,1]$. Since 
$|g_0'(\theta)|\ge d$ and $g_0'(\theta)'=0$ for all $\theta\in(0,1]$ then it is clear that $g_0$ is a piecewise
Markov expanding map and satisfies the R\`enyi condition. See Figure~1.
\begin{figure}[htb]
\begin{center}
\includegraphics[height=1.6in]{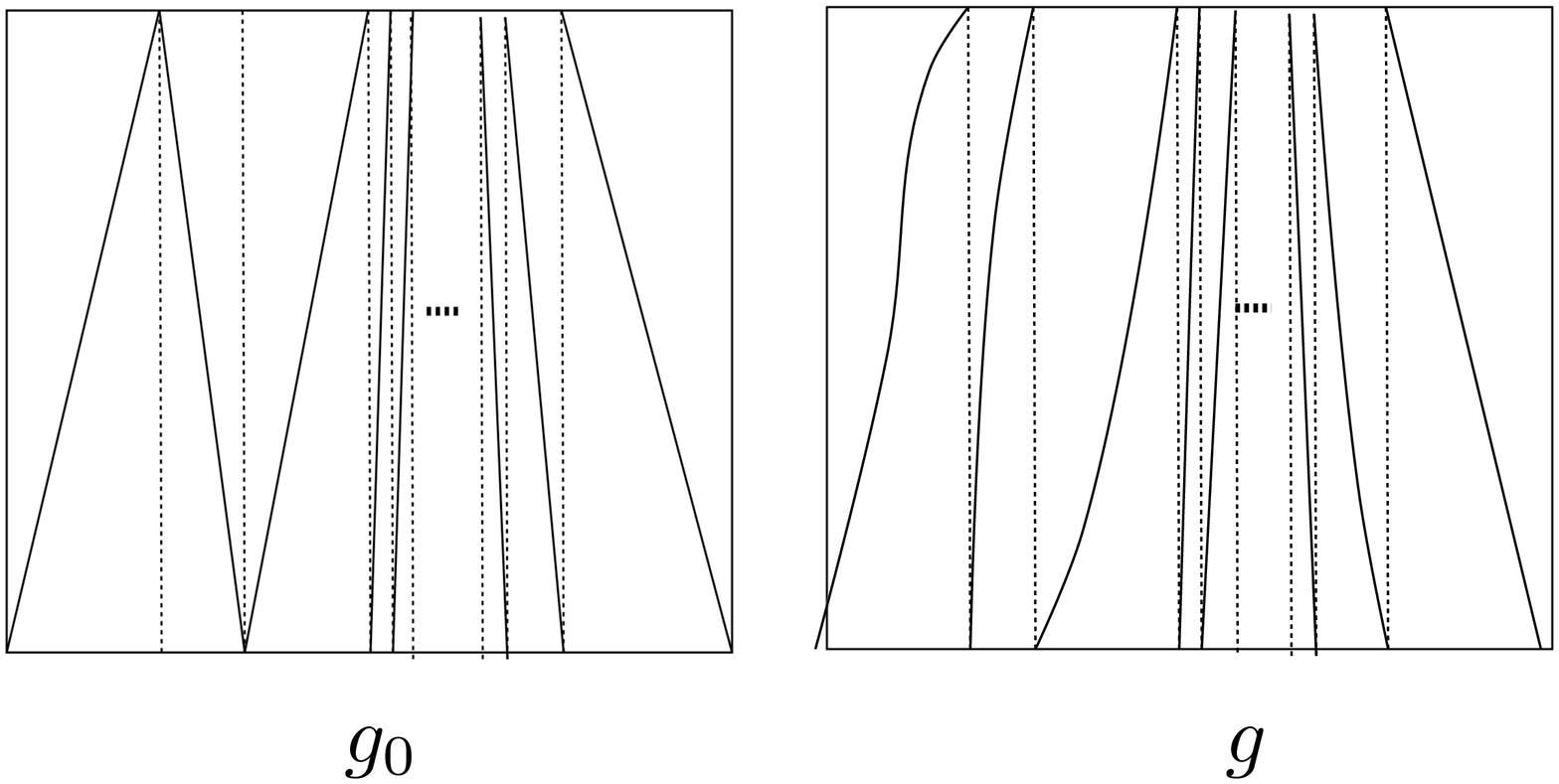}
\end{center}
\vspace{-.6cm}
\caption{Small perturbation of piecewise linear expanding map}
\end{figure}

Moreover, if $\tilde g$ is a Markov expanding map that is $\vep$-$C^3$-close enough to $g$ then 
it follows from Lemma~\ref{lem:open} that it also satisfies the R\`enyi  condition thus satisfying all 
the hypothesis to be used as base dynamics.
\end{example}

\section{Preliminaries}\label{sec:preliminaries}

In this section we recall some definitions and preliminaries that will be used in the proof of the main results.

\subsection{One dimensional dynamics}\label{subsec:hyperbolicity}

\subsubsection*{Combinatorial description of Markov expanding maps}\label{subsec:onedim}

Here we describe the Markov expanding maps $g$ 
from the combinatorial point of view. Let $\cP=\{\om_i\}_{i\in S, S\subset {\mathbb N}}$ be the
Markov partition for $g$.
Then there is a semi-conjugacy between the dynamics of $g$ 
and the full shift $\sigma : S^{\mathbb N} \to S^{\mathbb N}$ given by
$$
\sigma (s_0, s_1, s_2, \dots) = ( s_1, s_2, s_3, \dots),
$$
where the semi-conjugation is given by the \emph{itinerary map} $\iota: {\mathbb N}^{\mathbb N} \to (0,1]$
defined as  $\iota(s_0, s_1, s_2, \dots)= \theta$ and $\theta$ is the only point in $(0,1]$ satisfying 
$g^j(\theta) \in \om_{s_j}$ for all $j$. 
Set $\cP^{(n)}=\bigvee_{j=0}^{n-1}g^{-j}(\cP)$ and for any partition element $\om\in\cP^{(n)}$  define 
$\iota_n=\iota_n(\om)=( s_1, s_2, \dots, s_{n-1})$ as its \emph{$n$-th itinerary}.
For simplicity we will denote by $\om_{(s_0, s_1, s_2, \dots, s_{n-1})}$ the element of $\cP^{(n)}$
whose itinerary is $(s_0, s_1, s_2, \dots, s_{n-1})$. This will be helpful to give a precise description of
points that visit to definite regions of the phase space.

\subsubsection*{R\`enyi condition} 

Here we show that  R\'enyi condition is an open property among Markov 
expanding maps and relate this with the bounded distortion property. 

\begin{lemma}\label{lem:open}
Assume that $g:(0,1] \to (0,1]$ is a $C^3$-Markov expanding map satisfying the R\`enyi condition 
$|g''|\le K |g'|^2$.  If $\|g-\tilde g\|_{C^3}<\vep$ for small $\vep$ then $\tilde g$ satisfies $|\tilde g ''| 
\le \tilde K |\tilde g'|^2 $ with $\tilde K = (d-\vep)^{-2} \vep+ (1-\vep)^{-2} K$.
\end{lemma}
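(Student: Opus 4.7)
The proof is essentially a direct algebraic estimate exploiting the $C^3$-closeness between $g$ and $\tilde g$ (the $C^2$-closeness actually suffices), together with the lower bound $|g'|\ge d$ coming from the expanding hypothesis. The plan is to express $|\tilde g''|$ and $|g'|$ in terms of $|\tilde g'|$ and then substitute into the R\'enyi inequality for $g$.

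First I would record the three elementary consequences of $\|g-\tilde g\|_{C^3}<\vep$: namely
\[
|\tilde g''|\le |g''|+\vep,\qquad |g'|\le |\tilde g'|+\vep,\qquad |\tilde g'|\ge |g'|-\vep\ge d-\vep,
\]
the last one being where the expanding assumption $|g'|\ge d\ge 16$ enters. Combining the first bound with the R\'enyi inequality for $g$ gives
\[
|\tilde g''|\le K\,|g'|^2+\vep.
\]

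Next I would convert both summands into multiples of $|\tilde g'|^2$. For the second term I use $|\tilde g'|\ge d-\vep$ to write
\[
\vep=\vep\cdot\frac{(d-\vep)^2}{(d-\vep)^2}\le \frac{\vep}{(d-\vep)^2}\,|\tilde g'|^2.
\]
For the first term I use $|g'|\le |\tilde g'|+\vep$ and the fact that, for $\vep$ small enough, $|\tilde g'|\ge d-\vep\ge 1-\vep$, so
\[
|g'|^2\le \bigl(|\tilde g'|+\vep\bigr)^2
=|\tilde g'|^2\Bigl(1+\tfrac{\vep}{|\tilde g'|}\Bigr)^{\!2}
\le |\tilde g'|^2\Bigl(1+\tfrac{\vep}{1-\vep}\Bigr)^{\!2}
=\frac{|\tilde g'|^2}{(1-\vep)^2}.
\]

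Putting these two estimates together yields
\[
|\tilde g''|\le \Bigl(\tfrac{\vep}{(d-\vep)^2}+\tfrac{K}{(1-\vep)^2}\Bigr)|\tilde g'|^2=\tilde K\,|\tilde g'|^2,
\]
which is the desired R\'enyi-type bound with the constant announced in the statement. There is no real obstacle here; the only point to watch is the auxiliary inequality $|\tilde g'|\ge 1-\vep$ needed to absorb $(|\tilde g'|+\vep)^2$ into a clean multiple of $|\tilde g'|^2$, and this follows trivially from the base expansion $d\ge 16$ once $\vep$ is chosen small.
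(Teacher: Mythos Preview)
Your proof is correct and follows essentially the same route as the paper: split $|\tilde g''|\le |g''|+\vep$, bound the $\vep$-term using $|\tilde g'|\ge d-\vep$, and bound the $|g''|$-term by combining the R\'enyi condition with $|g'|^2\le |\tilde g'|^2/(1-\vep)^2$. The only cosmetic difference is that the paper obtains this last inequality from $|\tilde g'|\ge |g'|-|g'-\tilde g'|\ge (1-\vep)|g'|$, whereas you reach it via $|g'|\le|\tilde g'|+\vep$ together with $|\tilde g'|\ge 1-\vep$; both arguments are equivalent.
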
 

\begin{proof}
Assume that $\|g-\tilde g\|_{C^3}<\vep$. Using that $g$ is expanding it follows that $|g'-\tilde g'|<\vep<\vep |g'|$ 
and consequently
\begin{align*}
\frac{|\tilde g ''|}{|\tilde g'|^2} 
	& \leq \frac{|g ''| + |g ''-\tilde g''|}{|\tilde g'|^2} 
	\leq \frac{1}{(d-\vep)^2} \|g-\tilde g\|_{C^2} + \frac{|g ''|}{(|g'|-|g'-\tilde g'|)^2} \\
	& \leq \frac{1}{(d-\vep)^2} \|g-\tilde g\|_{C^2} + \frac{1}{(1-\vep)^2} \frac{|g''|}{ |g'|^2} 
	\leq \frac{1}{(d-\vep)^2} \|g-\tilde g\|_{C^2} + \frac{K}{(1-\vep)^2}.
\end{align*}
This proves that $\tilde g$ also satisfies the R\`enyi condition and proves the lemma. 
\end{proof}

In the second lemma we collect some bounded distortion estimates. 

\begin{lemma}\label{lem:bdistortion}
Let $g$ be a $C^3$-Markov expanding map satisfying $|g'(\theta)|\ge d$ and the R\`enyi condition 
$|g''|\le K |g'|^2$. Then, for all $\om\in \cP^{(n)}$ and $\theta_1,\theta_2\in \om$
$$
\exp\Big( -\frac{d K}{d-1} \Big)  
	\leq \frac{|(g^n)'(\theta_1)|}{|(g^n)'(\theta_2)|}
	\leq \exp\Big( \frac{d K}{d-1} \Big).
$$
\end{lemma}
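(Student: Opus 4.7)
The strategy is the classical one for distortion under expanding maps with the Rényi condition: take logarithms, decompose along the orbit via the chain rule, and use the Rényi bound to convert the bound on $(\log|g'|)'$ into a telescoping geometric sum controlled by the uniform expansion.

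More precisely, the plan is as follows. Fix $\om\in\cP^{(n)}$ and $\theta_1,\theta_2\in\om$. Since $g$ is a diffeomorphism on every element of $\cP^{(k+1)}$ for $0\le k\le n-1$, the map $g$ has constant sign of derivative on the interval $[g^k(\theta_1),g^k(\theta_2)]\subset\om_{s_k}$, and so
\[
\bigl|\log|(g^n)'(\theta_1)|-\log|(g^n)'(\theta_2)|\bigr|
\le \sum_{k=0}^{n-1}\Bigl|\log|g'(g^k(\theta_1))|-\log|g'(g^k(\theta_2))|\Bigr|.
\]
For each summand I would write the difference as the integral of $(\log|g'|)'=g''/g'$ between $g^k(\theta_1)$ and $g^k(\theta_2)$ and apply the Rényi condition $|g''/g'|\le K|g'|$. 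Changing variables via $g$ (which is a diffeomorphism on the interval in question) yields
\[
\Bigl|\log|g'(g^k(\theta_1))|-\log|g'(g^k(\theta_2))|\Bigr|
\le K\int_{g^k(\theta_1)}^{g^k(\theta_2)} |g'(\tau)|\,d\tau
= K\,|g^{k+1}(\theta_1)-g^{k+1}(\theta_2)|.
\]

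The remaining step is to control $|g^{k+1}(\theta_1)-g^{k+1}(\theta_2)|$ by backwards contraction. Since $g^{n-k-1}$ is a diffeomorphism on the interval containing $g^{k+1}(\theta_1)$ and $g^{k+1}(\theta_2)$ with derivative at least $d^{n-k-1}$ in absolute value, the mean value theorem gives
\[
|g^{k+1}(\theta_1)-g^{k+1}(\theta_2)|
\le d^{-(n-k-1)}\,|g^n(\theta_1)-g^n(\theta_2)|
\le d^{-(n-k-1)},
\]
using that $g^n(\om)=(0,1]$ has diameter at most $1$. Summing the resulting geometric series,
\[
\bigl|\log|(g^n)'(\theta_1)|-\log|(g^n)'(\theta_2)|\bigr|
\le K\sum_{k=0}^{n-1} d^{-(n-k-1)}
\le K\sum_{j=0}^{\infty} d^{-j}
= \frac{dK}{d-1},
\]
and exponentiating yields both inequalities in the statement.

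There is no real obstacle here; the only point needing some care is justifying that, for each $k$, the interval $[g^k(\theta_1),g^k(\theta_2)]$ lies in a single element of $\cP$ (so $g'$ has constant sign and the substitution in the integral is legitimate) and similarly that $g^{n-k-1}$ is a diffeomorphism on a neighborhood of $[g^{k+1}(\theta_1),g^{k+1}(\theta_2)]$. Both facts follow immediately from $\om\in\cP^{(n)}=\bigvee_{j=0}^{n-1}g^{-j}\cP$ and the Markov property $g(\om_i)=(0,1]$, so the proof reduces to the computation above.
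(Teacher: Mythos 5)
Your proof is correct and follows essentially the same route as the paper's: take logarithms, apply the chain rule, write each summand as the integral of $g''/g'$, invoke the R\`enyi condition to bound it by $K\,|g^{k+1}(\theta_1)-g^{k+1}(\theta_2)|$, and use backward contraction at rate $d$ to sum the geometric series to $\frac{dK}{d-1}$. The only difference is that you spell out the (routine) justification that each $[g^k(\theta_1),g^k(\theta_2)]$ lies in a single partition element, which the paper leaves implicit.
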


\begin{proof}
Let $\theta_1,\theta_2\in \om$ for some $\om\in \cP^{(n)}$ be given. We may assume, without loss of generality,
that $g^n\mid_\om$ is increasing. Then
\begin{align*}
\Big| \log \frac{(g^n)'(\theta_1)}{(g^n)'(\theta_2)} \Big|
	& \leq \sum_{j=0}^{n-1}\Big|  \log  (g'(g^j(\theta_1))) - \log (g'(g^j(\theta_2)))  \Big| \\
	& = \sum_{j=0}^{n-1}\Big|  \int_{g^j(\theta_1)}^{g^j(\theta_2)}  \frac{g''(\theta)}{g'(\theta)} \; d\theta\Big| 
	 \leq K \sum_{j=0}^{n-1}\Big|  \int_{g^j(\theta_1)}^{g^j(\theta_2)} g'(\theta) \; d\theta\Big| \\
	 & = K \sum_{j=1}^{n} | g^j(\theta_1) - g^j(\theta_2) |
	 \leq K \sum_{j=1}^{n} d^{-(n-j)} | g^n(\theta_1) - g^n(\theta_2) | 
\end{align*}
which is clearly bounded from above by $K( 1-d^{-1})^{-1}$. Since $\theta_1,\theta_2$ were arbitrary then the 
lower bound also holds. This finishes the proof of the lemma.
\end{proof}
 
Finally let us recall that it is well known that if $g$ is a Markov expanding map with the R\`enyi condition then 
there exists a unique absolutely continuous invariant probability measure. We will use the following strong Gibbs property for Lebesgue.

\begin{corollary}\label{cor:cylinders}
Let $g$ be a $C^3$-Markov expanding map satisfying $|g'(\theta)|\ge d$ and the R\`enyi condition 
$|g''|\le K |g'|^2$. Then, for all $\om\in\cP^{(n)}$ and $\theta^*\in \om$
$$
\exp\Big( - \frac{d K}{d-1} \Big)
	\leq \frac{\Leb(\om)}{  |(g^n)'(\theta^*)|^{-1}} 
	\leq \exp\Big( \frac{d K}{d-1} \Big)
$$
\end{corollary}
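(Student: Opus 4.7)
The plan is to derive Corollary~\ref{cor:cylinders} as a direct consequence of the bounded distortion estimate in Lemma~\ref{lem:bdistortion}, using the Markov structure of $g$ to pin down the absolute (rather than merely relative) scale.

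First I would exploit the Markov property: any $\om\in\cP^{(n)}$ has the form $\om=\om_{s_0}\cap g^{-1}(\om_{s_1})\cap\cdots\cap g^{-(n-1)}(\om_{s_{n-1}})$, and since $g(\om_i)=(0,1]$ for every $i\in S$, the restriction $g^n\mid_\om\colon\om\to(0,1]$ is a $C^3$ diffeomorphism onto the full interval. In particular, the change of variables formula gives
$$
1=\Leb\bigl(g^n(\om)\bigr)=\int_{\om} |(g^n)'(\theta)|\,d\theta.
$$

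Next I would apply Lemma~\ref{lem:bdistortion} to compare $|(g^n)'(\theta)|$ with $|(g^n)'(\theta^*)|$ for a fixed $\theta^*\in\om$: for every $\theta\in\om$ one has
$$
\exp\Bigl(-\tfrac{dK}{d-1}\Bigr)\,|(g^n)'(\theta^*)|\leq |(g^n)'(\theta)|\leq \exp\Bigl(\tfrac{dK}{d-1}\Bigr)\,|(g^n)'(\theta^*)|.
$$
Integrating these pointwise inequalities over $\om$ and using the identity above yields
$$
\exp\Bigl(-\tfrac{dK}{d-1}\Bigr)\,|(g^n)'(\theta^*)|\,\Leb(\om)\leq 1 \leq \exp\Bigl(\tfrac{dK}{d-1}\Bigr)\,|(g^n)'(\theta^*)|\,\Leb(\om),
$$
which rearranges to the asserted double inequality for $\Leb(\om)\big/|(g^n)'(\theta^*)|^{-1}$.

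There is no real obstacle here: the only thing one must be careful about is the use of the Markov assumption $g(\om_i)=(0,1]$, which is what upgrades the purely relative distortion control of Lemma~\ref{lem:bdistortion} into the absolute comparison between $\Leb(\om)$ and the inverse derivative. If, for some reason, the partition element $\om$ mapped onto a proper subinterval, one would only obtain Gibbs bounds relative to $\Leb(g^n(\om))$; the Markov hypothesis is precisely what renders this factor equal to $1$ and makes the constants universal.
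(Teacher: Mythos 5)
Your proposal is correct and follows exactly the paper's own argument: the change of variables identity $\int_\om |(g^n)'|\,d\theta=\Leb(g^n(\om))=1$ (valid by the Markov property) combined with the pointwise comparison from Lemma~\ref{lem:bdistortion}. You have merely written out the integration step that the paper leaves implicit.
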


\begin{proof}
The proof is a simple application of the usual change of coordinates to the diffeomorphism 
$g^n\mid_\om: \om \to (0,1]$ by means that
$$
 \int_\om |(g^n)'(\theta)| \; d\theta
 	= \Leb(g^n(\om))
	=  \Leb(f(0,1]) = 1
$$
together with the previous bounded distortion estimates.
\end{proof}

\subsubsection*{Hyperbolicity in dimension one}

In \cite{Sm00}, Smale proposed the density of hyperbolicity in dimension one as one of the problems for the 21st
century.   Recall that a $C^1$ endomorphism  of a compact interval is \emph{hyperbolic} if it has finitely many 
hyperbolic attracting  periodic points and the complement of the basins of attraction is a hyperbolic set.    
Such major achievement was obtained by Kozlovski, Shen and van Strien.

\begin{theorem}(Theorem~2 in~\cite{KSvS07a})\label{thm:KSV}
Hyperbolic maps are dense in the space of $C^k$ maps of the compact interval or the circle
for $k=1,2, \dots, \infty, \omega$.
\end{theorem}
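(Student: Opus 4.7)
The statement is a deep theorem whose full proof occupies the monograph \cite{KSvS07a}, so I can only sketch the high-level strategy. The plan is to establish a strong quasi-conformal rigidity result for one-dimensional maps with non-flat critical points, and then deduce density of hyperbolicity from it via a parameter-space codimension argument.

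First I would reduce to maps that are smooth with only non-flat critical points; for the $C^k$ case with finite $k$ this follows from standard smooth approximation, while the real analytic case is handled directly. I would then perturb to ensure all periodic orbits are hyperbolic, using a Kupka--Smale-type argument adapted to the $C^k$ topology (more delicate than the $C^1$ setting because one-dimensional perturbations are quite rigid). The next step is to rule out wandering intervals, following the arguments of Martens--de Melo--van Strien and de Melo--van Strien, so that the non-wandering dynamics is completely captured by combinatorial renormalization data. At this point the non-hyperbolic maps split into two essentially disjoint families: the non-renormalizable and the infinitely renormalizable ones.

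The core of the proof is the rigidity theorem: two maps in the class above with the same combinatorics are quasi-conformally conjugate on a suitable complex extension. For the non-renormalizable case I would build Yoccoz puzzle pieces around the critical set, pull back small neighborhoods, and establish complex bounds by controlling moduli of annuli via Koebe-type distortion estimates and cross-ratio inequalities. For the infinitely renormalizable case I would invoke the real bounds of Sullivan and Martens to control the geometry of consecutive renormalizations, then run a Sullivan-style pullback argument to promote the topological conjugacy to a quasi-conformal one. The quasi-conformal conjugacy is then upgraded on the post-critical set by a Bers--Royden / spreading argument and, via the absence of invariant line fields on attractors with critical orbits, shown to be conformal on the limit set, forcing the two maps to coincide up to a holomorphic change of coordinates.

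From rigidity, density of hyperbolicity follows as follows: the topological conjugacy class of a non-hyperbolic map carries nontrivial combinatorial or moduli invariants, so it has positive codimension inside the $C^k$ parameter space, while hyperbolicity is an open condition. Hence arbitrarily small perturbations escape any non-hyperbolic stratum and land in the hyperbolic one. The hardest step by far will be the rigidity theorem itself, and more specifically obtaining complex bounds uniformly across all renormalization scales for maps that are merely $C^k$ rather than polynomial; this is where substantial new techniques (puzzle geometry, cross-ratio estimates, controlled pullback) are needed to replace the global analytic extension available in the polynomial setting.
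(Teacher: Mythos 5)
You have set yourself an impossible grading target here: the paper does not prove this statement at all. Theorem~\ref{thm:KSV} is imported verbatim as Theorem~2 of \cite{KSvS07a}, and the only role it plays in the paper is as a black box in the proof of Theorem~\ref{thm:generic.skew} (to make the unimodal map on the invariant fiber hyperbolic after an arbitrarily small perturbation). So there is no ``paper's own proof'' to compare your attempt against, and no reviewer of this paper would expect one.

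As an outline of the actual argument in the literature, your sketch is a fair roadmap: reduction to maps with non-flat critical points and hyperbolic periodic orbits, exclusion of wandering intervals, and then the quasi-symmetric/quasi-conformal rigidity theorem proved via complex bounds, Yoccoz puzzles in the non-renormalizable case and real bounds plus a Sullivan-type pullback in the infinitely renormalizable case. Two caveats. First, the deduction of density from rigidity is stated too loosely: ``positive codimension of the non-hyperbolic stratum'' is not literally the mechanism; in \cite{KSvS07a} one argues that if a non-hyperbolic polynomial could not be approximated by hyperbolic ones then an open set of polynomials would be quasi-symmetrically conjugate to it, contradicting the finite dimensionality of the quasi-conformal deformation space (absence of invariant line fields enters here), and the $C^k$ and $C^\omega$ cases are then obtained from the polynomial case by separate approximation arguments, which are themselves delicate. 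Second, and more importantly, what you have written is a table of contents, not a proof: every load-bearing step (complex bounds for $C^k$ maps, the rigidity theorem, the qc--qs upgrade) is named but not established. That is unavoidable for a result of this magnitude, but it means the proposal cannot be accepted as a proof; in the context of this paper the correct move is exactly what the author does, namely to cite \cite{KSvS07a}.
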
 

\subsubsection*{Fibered expansion}\label{subsec:fe}

Now we collect some results on a mechanism to obtain expansion in the fiber direction. Roughly, 
expansion for random composition of perturbations of a Misiurewicz quadratic map $h(x)=a_0-x^2$ is obtained 
if orbits avoid the critical region and the loss of expansion in each return to the critical region is 
proportional to the return depth.  
More precisely, 

\begin{proposition} \label{prop:uniform.expansion}\cite[Lemmas~2.4 and~2.5]{Vi97}
Given $\al>0$, take $h(x)=a_0-x^2$, $f(\theta,x)=a_0+\al\sin(2\pi\theta)-x^2$
and $\varphi(\theta,x)=(g(\theta), f_\al(\theta,x))$. There are constants $0<\kappa<1$ and $0<\eta\leq \frac13 $
(depending only on $h$) and $\de_1, C_2>0$,  $\si_1,\si_2>1$ such that,
for every small $\al>0$ there exists $N(\al)\geq 1$ satisfying:
\begin{enumerate}
\item $K_0 \log\frac1\al \leq N(\al) \leq K_1 \log\frac1\al $ for some uniform constants $K_0, K_1>0$;
\item Given an interval $I\subset I_0$, for every $(\theta,x)\in (0,1]\times I$ with $|x|<2\sqrt{\al}$ the iterates $(\theta_j,x_j)=\varphi^j(\theta,x)$ 
	satisfy $|x_j|\geq \sqrt{\al}$ for every $j=1\dots N(\al)$;
\item $
	\prod_{j=0}^{N(\al)-1} |\partial_x f (\theta_j,x_j)| 
		\geq |x| \al^{-1+\eta}
	$
	for all $(\theta,x)\in (0,1]\times I$ with $|x|<2\sqrt{\al}$;
\item For every $(\theta,x)\in (0,1]\times I$ with $\sqrt{\al}\leq |x|<\de_1$ there exists $p(x)\leq N(\al)$ so that
	$
	\prod_{j=0}^{p(x)-1} |\partial_x f (\theta_j,x_j)| 
		\geq \frac1\kappa \si_1^{p(x)};
	$
\item $
	\prod_{j=0}^{n-1} |\partial_x f (\theta_j,x_j)| 
		\geq C_2 \sqrt{\al} \si_2^n
	$
	for every $(\theta,x)\in (0,1]\times I$ with $|x_j|\geq \sqrt{\al}$ for every $j=1,\dots, n-1$; and
\item $
	\prod_{j=0}^{n-1} |\partial_x f (\theta_j,x_j)| 
		\geq C_2 \si_2^n
	$
	for all $(\theta,x)\in (0,1]\times I$ such that $|x_j|\geq \sqrt{\al}$ with $j=1,\dots, n-1$ 
	and	$|x_n|\le \de_1$.
\end{enumerate}
\end{proposition}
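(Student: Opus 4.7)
The plan is to view $f_\al(\theta,\cdot)$ as an $O(\al)$--perturbation of the Misiurewicz quadratic $h(x)=a_0-x^2$ and to transfer hyperbolicity along the postcritical orbit of $h$ to the skew--product. The Misiurewicz assumption supplies constants $\Lambda>1$ and $C_0,\delta_0>0$ such that $|(h^n)'(h(0))|\ge C_0\Lambda^n$ and $|h^n(0)|\ge \delta_0$ for all $n\ge 1$, plus bounded distortion for $h$ restricted to the regions visited by the critical orbit. Everything else follows by splitting orbits into \emph{binding} pieces (the first $N(\al)$ iterates following a deep return to the critical strip) and \emph{free} pieces (iterates outside the $\sqrt\al$--strip).

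For (1)--(3) I would run a shadowing argument: if $|x_0|<2\sqrt\al$ then $|x_1-h(0)|\le \al|\sin(2\pi\theta_0)|+x_0^2=O(\al)$, and since $\partial_x f_\al(\theta_j,x_j)=-2x_j$ and $\partial_\theta f_\al=2\pi\al\cos(2\pi\theta)$, a telescoping Mean Value estimate gives inductively
$$
|x_j-h^j(0)|\le C_1\al\,|(h^{j-1})'(h(0))|\le C_1C_0^{-1}\al\,\Lambda^{j}
$$
as long as the right--hand side stays below a small absolute constant. Choosing $N(\al)$ as the largest integer with $C_1\al\Lambda^{N(\al)}\le \tfrac12\min\{\delta_0,\sqrt\al\}$ yields $N(\al)\asymp\log(1/\al)$, giving (1); the same estimate forces $|x_j|\ge \tfrac12\delta_0\ge\sqrt\al$ for $1\le j\le N(\al)$, giving (2). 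For (3), the shadowing and bounded distortion along the postcritical orbit of $h$ give $\prod_{j=1}^{N(\al)-1}2|x_j|\ge C_2|(h^{N(\al)-1})'(h(0))|\ge C_3\Lambda^{N(\al)}$, while the factor at $j=0$ contributes $|x_0|$; the tuning of $N(\al)$ is made so that $\Lambda^{N(\al)}\ge \al^{-1+\eta}$ for any prescribed small $\eta>0$.

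For (4)--(6) I would interpolate between the binding mechanism above and the crude free bound $|\partial_x f_\al|=2|x|\ge 2\sqrt\al$. For (4), given $\sqrt\al\le|x_0|<\delta_1$ one iterates until $|x_j|$ either exits $(-\delta_1,\delta_1)$, obtaining exponential expansion out of (5)--(6) combined with $|x_0|\ge\sqrt\al$, or dives into $(-\sqrt\al,\sqrt\al)$, in which case (3) supplies $\prod\ge|x_j|\al^{-1+\eta}\ge\sqrt\al\cdot\al^{-1+\eta}$; the smaller of the two produces $p(x)\le N(\al)$ and a uniform rate $\sigma_1>1$, tuning $\kappa<1$ from the Misiurewicz constants. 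For (5), one decomposes the orbit $0,1,\dots,n-1$ into maximal free pieces separated by (possibly inexistent) returns to $[-\delta_1,-\sqrt\al]\cup[\sqrt\al,\delta_1]$, applies (4) over each return piece, and multiplies: the initial and terminal free pieces contribute an overall factor $\sqrt\al$, which is the source of the explicit constant. Item (6) sharpens (5) by removing the terminal $\sqrt\al$ because the orbit ends inside the $\delta_1$--neighborhood, so one applies (4) up to the final return without the cost of exiting it.

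The main obstacle is the shadowing step: one must show that the joint effect of the $\al$--perturbation and of distortion along iterates of $h$ does not destroy the inductive estimate before the prescribed time $N(\al)$, and one must do this uniformly in $\theta\in(0,1]$ and in admissible choices of the base dynamics $g$. This reduces to controlling the Misiurewicz distortion constants, which depend only on $h$ (whence the dependence of $\kappa,\eta,\Lambda,\delta_0$ stated in the proposition), and to checking that the choice of $N(\al)$ beats those distortion constants. Once this is established, the rest of the proof is a clean bookkeeping of binding and free periods, exactly in the spirit of Lemmas~2.4 and~2.5 of~\cite{Vi97}, whose statements and proofs transfer verbatim since they use only properties of the fiber map $f_\al(\theta,\cdot)$ and not the particular structure of $g$.
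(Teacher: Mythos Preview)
The paper does not prove this proposition at all: it is stated with an explicit citation to \cite[Lemmas~2.4 and~2.5]{Vi97} and used as a black box. Your sketch is therefore not to be compared with anything in the present paper, but with Viana's original argument, and on that level your outline follows the same strategy (shadowing of the postcritical orbit during a binding period, hyperbolicity of the Misiurewicz map away from the critical point for the free pieces).

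There is, however, a genuine quantitative slip in your choice of the binding time. You take $N(\al)$ to be the largest integer with $C_1\al\,\Lambda^{N(\al)}\le \tfrac12\min\{\delta_0,\sqrt\al\}$. For small $\al$ the minimum equals $\sqrt\al$, so this forces $\Lambda^{N(\al)}\lesssim \al^{-1/2}$. But then in (3) you need $\Lambda^{N(\al)}\ge \al^{-1+\eta}$ with $\eta\le \tfrac13$, which would require $\eta\ge \tfrac12$: a contradiction. The fix is simply to drop the $\sqrt\al$ from the minimum. Item (2) does not need the shadowing error to be below $\sqrt\al$; it only needs $|x_j - h^j(0)|\le \tfrac12\delta_0$, since then $|x_j|\ge \delta_0 - \tfrac12\delta_0 = \tfrac12\delta_0\ge\sqrt\al$ for small $\al$. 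With the correct constraint $C_1\al\,\Lambda^{N(\al)}\le \tfrac12\delta_0$ one gets $\Lambda^{N(\al)}\asymp\al^{-1}$, and (3) follows with any small $\eta>0$ absorbing the distortion constants.

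A second, more organizational point: your derivation of (5) invokes (4) on the ``return pieces'', while your derivation of (4) invokes (5)--(6) on the ``exit'' case, which as written is circular. In Viana's argument the free estimate (items (5)--(6)) is obtained first and independently, from the fact that for the Misiurewicz map $h$ the maximal invariant set outside $(-\delta_1,\delta_1)$ is a hyperbolic repeller (so $|(h^n)'(x)|\ge C\sigma^n$ whenever the orbit stays outside that neighbourhood), and this is then combined with the binding estimate to handle shallow returns into $[\sqrt\al,\delta_1]$. Once (5)--(6) are in place, (4) follows as you indicate. Reordering your argument this way removes the circularity; the ingredients you list are the right ones.
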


Taking the previous proposition into account it is important to estimate how close typical points
return close to the critical region.

\subsection{Partial hyperbolicity and admissible curves}

Under our assumptions on the generalized Viana maps of Definition~\ref{def:generalized},
we obtain that the map is indeed partially hyperbolic in the sense that the dynamics along the 
horizontal direction dominates the dynamics along the vertical fibers. This will be made precise
in terms of admissible curves as we now describe.

\begin{definition}
A curve $\hat Y=\text{graph}(Y)$ with $Y: (0,1] \to I_0$ is an \emph{admissible curve}  if
it is $C^2$ differentiable, 
$|Y'(\theta)|\le \al$ and $|Y''(\theta)|\le \al$ for every $\theta\in (0,1]$.
\end{definition}

The strong expansion assumption on the Markov map $g$ yields a domination property as we now describe. 

\begin{lemma}
If $\hat Y$ is an admissible curve then for every $\om\in \cP^{(n)}$ it follows that $\varphi^n(\hat Y\mid_\om)$
is an admissible curve. In particular $\varphi^n(\hat Y)$ is an at most countable collection of 
admissible curves.
\end{lemma}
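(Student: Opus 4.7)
The plan is to reduce the statement to the base case $n=1$ by induction and then verify the $C^2$-bounds on the image curve directly. Fix $\om\in\cP$; since $g|_\om:\om\to(0,1]$ is a $C^3$-diffeomorphism with inverse branch $h_\om:(0,1]\to\om$, the image $\varphi(\hat Y|_\om)$ is the graph over $(0,1]$ of
\[
\tilde Y(s)=f_\al\bigl(h_\om(s),\,Y(h_\om(s))\bigr),\qquad s\in(0,1],
\]
so it suffices to check that $|\tilde Y'|\le\al$ and $|\tilde Y''|\le\al$, provided $d$ is large and $\al$ is small.

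The first derivative bound is a direct application of the chain rule. With $\theta=h_\om(s)$,
\[
\tilde Y'(s)=\bigl[\partial_\theta f_\al(\theta,Y(\theta))+\partial_x f_\al(\theta,Y(\theta))\,Y'(\theta)\bigr]\,h_\om'(s),
\]
and using $|\partial_\theta f_\al|\le 2\pi\al$, $|\partial_x f_\al|=|2x|\le C_0$ (bounded on $I_0$), $|Y'|\le\al$, and $|h_\om'|\le 1/d\le 1/16$, one obtains $|\tilde Y'|\le(2\pi+C_0)\al/d<\al$. For the second derivative, using $\partial_\theta\partial_x f_\al\equiv 0$ one expands
\[
\tilde Y''(s)=\bigl[\partial_\theta^2 f_\al+\partial_x^2 f_\al\,(Y')^2+\partial_x f_\al\,Y''\bigr]\bigl(h_\om'(s)\bigr)^2+\bigl[\partial_\theta f_\al+\partial_x f_\al\,Y'\bigr]h_\om''(s).
\]
The first bracket is $O(\al)$ and is multiplied by $(h_\om')^2\le 1/d^2$, while the second bracket is again $O(\al)$ and is multiplied by $h_\om''$, controlled by the R\`enyi condition via
\[
|h_\om''(s)|=\frac{|g''(\theta)|}{|g'(\theta)|^3}\le\frac{K}{|g'(\theta)|}\le\frac{K}{d}.
\]
Combining these yields $|\tilde Y''|\le\al$ once $d$ is large relative to $K$ and $\al$ is sufficiently small. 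This estimate is the main obstacle: without the R\`enyi bound the term $h_\om''$ is uncontrolled and admissibility could be destroyed in one iterate; the inequality $d\ge 16$, together with the freedom to shrink $\al$, is precisely what makes the bounds balance.

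The induction step is then routine. Writing $\om'\in\cP^{(n+1)}$ as $\om\cap g^{-1}(\om'')$ with $\om\in\cP$ and $\om''\in\cP^{(n)}$, and setting $\hat Z=\varphi(\hat Y|_\om)$, we have
\[
\varphi^{n+1}(\hat Y|_{\om'})=\varphi^n(\hat Z|_{\om''}).
\]
By the base case $\hat Z$ is an admissible curve, and by the inductive hypothesis applied to $\hat Z$ the right-hand side is admissible. Since $\cP^{(n)}$ is at most countable, so is the collection $\{\varphi^n(\hat Y|_\om):\om\in\cP^{(n)}\}$, and its union is $\varphi^n(\hat Y)$, finishing the proof.
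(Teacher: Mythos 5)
Your proof is correct and follows essentially the same route as the paper's: reduce to the case $n=1$, bound the first and second derivatives of the image graph by the chain rule using $|g'|\ge d\ge 16$, and then recurse over the countable Markov partition. The only cosmetic difference is that you parametrize the image curve through the inverse branch $h_\om$, so the $g''$-contribution appears as $h_\om''$ controlled by the R\`enyi condition, whereas the paper writes $Y_1(g(\theta))$ and divides by $(g')^2$; the estimates are identical, and you are if anything more explicit than the paper about why that term is harmless (it vanishes for the piecewise linear base map and is $O(\vep)$ for its perturbations).
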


\begin{proof}
This lemma follows from \cite[Lemma~2.1]{Vi97}, whose argument we reproduce here for completeness and
the reader's convenience. Since it is enough to prove the lemma for $n=1$ and use the argument recursively, 
let $\hat Y=\text{graph}~{Y}$ be an admissible curve and take $\om\in\cP$. Then, for every
$\theta\in P$
$$
\varphi(\hat Y(\theta)) = \varphi(\theta, Y(\theta))
		= (g(\theta),f(\theta,Y(\theta)))
		= (g(\theta),Y_1(g(\theta)))
$$
where, by the chain rule and definition of $Y_1:(0,1] \to I$,
$$
|Y_1'(g(\theta))|  
	\leq \frac{1}{|g'(\theta)|} \, 
		\left|  \partial_\theta f (\hat Y(\theta)) +\partial_x f(\hat Y(\theta)) \, Y'(\theta) \right|
	\leq \frac{2\pi+4}{16} \al 
	<\al.
$$
Analogously it is not hard to check that
\begin{align*}
|Y_1''(g(\theta))|  
	\leq \frac{1}{|g'(\theta)|^2} \, &
		\left|  \partial_{\theta\theta} f (\hat Y(\theta)) +\partial_{x\theta} f(\hat Y(\theta)) \, Y'(\theta) 
			+ \partial_{\theta x} f(\hat Y(\theta)) \, Y'(\theta) \right. \\
			& \left. + \partial_{x} f(\hat Y(\theta)) \, Y''(\theta) +  \partial_{x x} f(\hat Y(\theta)) \, (Y'(\theta))^2 
			- Y_1'(g(\theta)) \, g''(\theta) \right|, 
\end{align*}
which is smaller than $\al$ since the partial derivatives of $f$ are smaller compared with the term 
$1/|g'|^2 \le 1/\,16^2$. 
This proves that $\hat Y_1 = \varphi(\hat Y\mid_\om)$ is an admissible curve.
Since $\cP$ is at most countable then $\varphi(\hat Y)$ is the union of at most countable admissible curves.
This finishes the proof of the lemma.
\end{proof}

The crucial property of admissible curves is that their images by $\varphi$ are non-flat. 

\begin{lemma}\label{le:transversality}
Let $\hat Y=\text{graph}~{Y}$ be an admissible curve and set $\hat Y_1(\theta)=\varphi(\hat Y(\theta))
=(g(\theta),Y_1(\theta))$. Then $|Y_1'(\theta)|\ge \al/2$ or $|Y_1''(\theta)|\ge 4\al$ for every $\theta\in (0,1]$ and
\begin{equation*}
\Leb \left(  \theta\in (0,1] :  \hat Y_1(\theta) \in (0,1]\times I \right) 
	\leq \frac{6|I|}{\al} + 2\sqrt{\frac{|I|}{\al}}.
\end{equation*}
for any interval $I\subset I_0$.
\end{lemma}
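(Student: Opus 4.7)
The argument splits naturally into two steps: the pointwise dichotomy $|Y_1'|\geq\alpha/2$ or $|Y_1''|\geq 4\alpha$, and the subsequent Lebesgue estimate. Since $Y_1(\theta)=a_0+\alpha\sin(2\pi\theta)-Y(\theta)^2$, direct differentiation gives
\[
Y_1'(\theta) = 2\pi\alpha\cos(2\pi\theta) - 2Y(\theta)Y'(\theta),
\qquad
Y_1''(\theta) = -4\pi^2\alpha\sin(2\pi\theta) - 2Y'(\theta)^2 - 2Y(\theta)Y''(\theta).
\]
The admissibility bounds $|Y'|,|Y''|\leq\alpha$ together with $|Y|\leq 2$ (which follows from $Y(\theta)\in I_0\subset[h^2(0),h(0)]\subset[-2,2]$ for $a_0\in(0,2]$) control the correction terms by $O(\alpha)$, so the leading behaviour comes from the trigonometric part.

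For the dichotomy I would use $|Y_1'(\theta)|\geq 2\pi\alpha|\cos(2\pi\theta)|-4\alpha$ and $|Y_1''(\theta)|\geq 4\pi^2\alpha|\sin(2\pi\theta)|-4\alpha-2\alpha^2$. The first forces $|Y_1'|\geq\alpha/2$ as soon as $|\cos(2\pi\theta)|\geq 9/(4\pi)$, and the second forces $|Y_1''|\geq 4\alpha$ as soon as $|\sin(2\pi\theta)|$ exceeds a fixed small constant; since these thresholds are complementary under $\sin^2+\cos^2=1$, one of the two bounds holds at every $\theta$. For the measure estimate I decompose $(0,1]=A\cup B$ with $A=\{|Y_1'|\geq\alpha/2\}$ open and $B\subset\{|Y_1''|\geq 4\alpha\}$. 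On each connected component of $B$ the sign of $Y_1''$ is constant, so $Y_1'$ is strictly monotone; since $|Y_1'|<\alpha/2$ inside $B$ while $|Y_1'|=\alpha/2$ at each endpoint of the component lying in $(0,1)$, monotonicity forces $Y_1'$ to change sign inside, producing exactly one critical point of $Y_1$ per such component.

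The total number of critical points is at most $2$: the equation $Y_1'(\theta)=0$ requires $|\cos(2\pi\theta)|\leq 2/\pi$ (using $|YY'|\leq 2\alpha$), confining solutions to two small intervals around $\theta\in\{1/4,3/4\}$, and on each of these intervals the strict bound $|Y_1''|\geq 4\alpha$ allows only one zero of $Y_1'$. Hence $B$ has at most two components and $A$ at most three. On each component of $A$, monotonicity of $Y_1$ together with $|Y_1'|\geq\alpha/2$ gives via change of variables the preimage estimate $2|I|/\alpha$, so $A$ contributes at most $6|I|/\alpha$. On each component of $B$ around its critical point $\theta_0$, integrating $|Y_1''|\geq 4\alpha$ twice yields $|Y_1(\theta)-Y_1(\theta_0)|\geq 2\alpha(\theta-\theta_0)^2$; combined with the elementary inequality $(\theta^-+\theta^+-2\theta_0)(\theta^--\theta^+)\geq(\theta^--\theta^+)^2$, valid when $\theta^\pm\geq\theta_0$, this bounds the preimage length on each side of $\theta_0$ by a constant multiple of $\sqrt{|I|/\alpha}$. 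Summing over the at most two critical points yields the claimed contribution of order $\sqrt{|I|/\alpha}$.

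The main obstacle I anticipate is the bookkeeping near the parameter endpoints $\{0,1\}$: a component of $B$ may be truncated there and need not contain a critical point inside. In that case the truncated component still contributes a comparable bound via either the monotone or the quadratic regime, without increasing the number of monotone pieces in $A$, so adding the contributions from $A$ and $B$ yields the final bound $6|I|/\alpha+2\sqrt{|I|/\alpha}$.
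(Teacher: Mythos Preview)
Your argument is correct and follows essentially the same route as the paper: compute $Y_1'$ and $Y_1''$, establish the dichotomy via the trigonometric leading terms, then bound the preimage by a monotone estimate where $|Y_1'|\ge\alpha/2$ and a quadratic (Taylor) estimate where $|Y_1''|\ge 4\alpha$. The one difference in execution is the decomposition: rather than your dynamically defined $A=\{|Y_1'|\ge\alpha/2\}$ and $B=A^c$ (which forces you to count zeros of $Y_1'$ and handle the endpoint cases you flag), the paper simply takes the fixed set $\mathcal A=\{\theta:|\sin(2\pi\theta)|\le 1/3\}$, which visibly has three components containing the endpoints, while $(0,1]\setminus\mathcal A$ has two components bounded away from $0$ and $1$; this makes the component count immediate and eliminates the endpoint bookkeeping entirely.
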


\begin{proof}
The proof follows the same ideas of \cite[Lemma~2.2]{Vi97} even with the presence of discontinuities for $g$. 
In fact, set $\hat Y(\theta)=(\theta,Y(\theta))$ and notice that for all $\theta\in (0,1]$
\begin{align}\label{eq:der1}
Y_1'(\theta) =\partial_\theta f (\hat Y(\theta)) +\partial_x f(\hat Y(\theta)) \, Y'(\theta) 
		  = 2\pi \alpha \cos (2\pi\theta) -2 Y(\theta) \, Y'(\theta)
\end{align} 
and $Y_1''(\theta)= - 4\pi^2 \sin(2\pi\theta) - 2 Y'(\theta)^2 -2 Y(\theta) \, Y''(\theta)$.
If $\theta\in \mathcal A=\{\tilde \theta \in (0,1]: |\sin(2\pi\tilde\theta)|\leq \frac13 \}$ then 
$|\cos (2\pi\theta)|\geq \frac{11}{12}$ and it follows from~\eqref{eq:der1} that
$|Y_1'(\theta)|\geq  (\frac{11\pi}{6} -4) \al>\frac{\al}{2}$. Otherwise, for $\theta\in (0,1]\setminus \cA$ 
it follows that $|Y_1''(\theta)|\geq  (\frac{4\pi^2}{3} -2 - 2 \al) \al\geq 4\al$. This proves the first statement of the lemma.

Now, notice that $\mathcal A$  has three connected components and 
$|Y_1'(\theta)|\geq \al/2$ for all $\theta\in \mathcal A$. Moreover, since the map $\theta\mapsto  Y_1(\theta)$ 
is $C^1$-differentiable then applying the Mean Value Theorem applied to $Y_1$ on each connected component of 
$\mathcal A$ it follows that
$
\Leb \left(  \theta\in \mathcal A :  \hat Y_1(\theta) \in (0,1]\times I \right)
	\leq \frac{6|I|}{\al}.
$ 
Using that $|Y_1''|\geq 4\al$ on the two connected components of $(0,1]\setminus \mathcal A$ a similar argument 
shows that
$
\Leb \left(  \theta\in (0,1]\setminus \mathcal A :  \hat Y_1(\theta) \in (0,1]\times I \right)
	\leq 4 \sqrt{\frac{|I|}{4\al}}=2 \sqrt{\frac{|I|}{\al}}.
$ 
The proof of the lemma is now complete. 
\end{proof}

We obtain the following very useful consequence.

\begin{corollary}\label{cor:iteration}
Let $\hat Y=\text{graph}~(Y)$ be an admissible curve and set $\hat Y_j=\varphi^j(\hat Y)$.
Then there exists $C>0$ (depending only on $g$) such that for any subinterval satisfying $|I|\le \al$
it holds 
\begin{equation*}
\Leb \left(  \theta\in (0,1] :  \hat Y_j(\theta) \in (0,1] \times I \right)
	\leq C \sqrt{\frac{|I|}{\al}}
	\quad\text{for all} \; j\ge 1.
\end{equation*}
\end{corollary}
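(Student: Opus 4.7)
The plan is to reduce the bound for general $j$ to the one-step transversality estimate of Lemma~\ref{le:transversality} by decomposing the base $(0,1]$ along the refined Markov partition $\cP^{(j-1)}$ and pulling back via $g^{j-1}$ using bounded distortion. For $j=1$ the conclusion is immediate from Lemma~\ref{le:transversality}: the assumption $|I|\le\al$ gives $|I|/\al\le\sqrt{|I|/\al}$, so the bound $6|I|/\al+2\sqrt{|I|/\al}$ there is dominated by $8\sqrt{|I|/\al}$.

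For $j\ge 2$, I would fix $\om\in\cP^{(j-1)}$ and invoke the preceding lemma on iteration of admissible curves: $\varphi^{j-1}(\hat Y\mid_\om)$ is a full admissible curve $\hat Z_\om=\text{graph}(Z_\om)$, and since $g^{j-1}\mid_\om:\om\to(0,1]$ is a $C^3$ diffeomorphism one has $\hat Y_{j-1}(\theta)=(g^{j-1}(\theta),Z_\om(g^{j-1}(\theta)))$ for every $\theta\in\om$. Consequently
\[
\{\theta\in\om:\hat Y_j(\theta)\in(0,1]\times I\}=(g^{j-1}\mid_\om)^{-1}\bigl\{\tau\in(0,1]:\varphi(\hat Z_\om(\tau))\in(0,1]\times I\bigr\},
\]
and Lemma~\ref{le:transversality} applied to the admissible curve $\hat Z_\om$ bounds the Lebesgue measure of the set on the right by $8\sqrt{|I|/\al}$.

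The final step is a bounded distortion change of variables: by Corollary~\ref{cor:cylinders}, $|(g^{j-1})'|$ is comparable to $\Leb(\om)^{-1}$ on $\om$ up to the universal factor $e^{dK/(d-1)}$, so for any measurable $\tilde A\subset(0,1]$
\[
\Leb\bigl((g^{j-1}\mid_\om)^{-1}(\tilde A)\bigr)\le e^{dK/(d-1)}\,\Leb(\om)\,\Leb(\tilde A).
\]
Summing over $\om\in\cP^{(j-1)}$, whose elements have total Lebesgue measure one, yields the desired bound with $C=8\,e^{dK/(d-1)}$, a constant independent of $j$. No step is genuinely hard once the two preceding lemmas are in place; the only delicate point is keeping the distortion constant uniform in $j$ and in $\om\in\cP^{(j-1)}$, which is exactly the content of the R\`enyi condition via Corollary~\ref{cor:cylinders}.
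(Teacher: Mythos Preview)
Your proof is correct and follows essentially the same approach as the paper: decompose along $\cP^{(j-1)}$, push each piece forward by $\varphi^{j-1}$ to a full admissible curve, apply Lemma~\ref{le:transversality}, and pull back via the bounded distortion of $g^{j-1}\mid_\om$. The only differences are cosmetic---you invoke Corollary~\ref{cor:cylinders} explicitly for the distortion constant and carry the constant $8$ rather than $6$, but the argument is the same.
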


\begin{proof}
First note that the case $j=1$ corresponds to the previous lemma. Hence, let $j\ge 2$ be arbitrary and fixed. 
If $\om\in \cP^{(j-1)}$ then $g^{j-1}(\om)=(0,1]$ and $\varphi^{j-1}(\hat Y\mid_\om)$ is an admissible curve. Therefore, 
$
\Leb (  \theta\in (0,1]:  \varphi(\varphi^{j-1}(\hat Y\mid_\om (\theta) )) \in (0,1]\times I )
	\leq 6\sqrt{\frac{|I|}{\al}}
$
for any subinterval $I$ satisfying $|I|\le \al$. Then one can use the bounded distortion property 
to get
\begin{align*}
\Leb \left(  \theta\in (0,1]:  \hat Y_j(\theta) \in \hat I \right)
	& = \sum_{\om\in \cP^{(j-1)}}
		\Leb \left(  \theta\in \om:  \varphi(\varphi^{j-1}(\hat Y\mid_\om (\theta) )) \in \hat I \right) \\
	& \leq \sum_{\om\in \cP^{(j-1)}} 6\frac{Kd}{d-1}\sqrt{\frac{|I|}{\al}} |\om| 
	\leq C\sqrt{\frac{|I|}{\al}},
\end{align*}
where $\hat I = (0,1] \times I$ and $C=6K>0$ depends only on $g$. The proof of the corollary is now complete.
\end{proof}

\begin{remark}
Let us mention that the bound in the right hand side of the expression in Corollary~\ref{cor:iteration} depends on 
$\al$  and it increases when $\al$ approaches zero. In fact, as $\al$ will be required to be very small 
for the large deviations argument in Subsection~\ref{subsec:largedeviations}, it is necessary to obtain similar estimates where the right hand side above does not depend on $\al$.
\end{remark}

\section{Positive Lyapunov exponents for the skew-product $\varphi_\al$}\label{sec:positive.exp}

In this section we study the recurrence of typical points near the critical region and deduce
existence of positive Lyapunov exponents Lebesgue almost everywhere.

\subsection{Recurrence estimates}

For notational simplicity set $\hat J(r)=(0,1]\times  J(r)$ for every $r$, with
$J(r)=[-\sqrt{\alpha} e^{-r}, \sqrt{\alpha} e^{-r}]$. In the next proposition we show that for a sufficiently 
large iterate $\varphi^{M(\al)}$ of $\varphi$ the measure of the set of points which have exponential 
deep returns decay exponentially fast with a rate that does not involve $\al$. More precisely, if 
$0<\eta<\frac13$ is as given in Propositon~\ref{prop:uniform.expansion} we obtain the following.

\begin{proposition}\label{prop:main1}
There exists  $\vep>0$ small so that every $\varphi$ that is $\vep$-$C^3$-close to $\varphi_\al$ satisfies
the following property: there exists $\tilde C,\beta>0$ and for any given $\al>0$ there is a positive 
integer $M=M(\al) < N(\al)$ such that if  $\hat Y = \text{graph}~(Y)$ is an admissible curve, then
\begin{equation*}
\Leb \left( \theta\in (0,1] : \hat Y_{M}(\theta) \in \hat J(r-2) \right)
	\leq \tilde C e^{-5\beta r}
\end{equation*}
for every $r\geq \left(\frac{1}{2}-2\eta\right)\log\frac{1}{\al}$.
\end{proposition}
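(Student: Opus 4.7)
The plan is to combine the Markov decomposition of $g$, the bounded distortion of Corollary~\ref{cor:cylinders}, the admissible-curve transversality of Lemma~\ref{le:transversality}, and the explicit quadratic form of $f_\al$, in order to replace the $\al$-dependent single-step bound of Corollary~\ref{cor:iteration} by an $\al$-uniform bound after $M=M(\al)$ iterates.

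First I would decompose along the refined Markov partition $\mathcal P^{(M-1)}$ for $M = M(\al)$ to be chosen:
\begin{equation*}
\Leb\bigl(\theta\in(0,1] : \hat Y_M(\theta)\in \hat J(r-2)\bigr) = \sum_{\omega \in \mathcal P^{(M-1)}} \Leb\bigl(\theta \in \omega : \hat Y_M(\theta)\in \hat J(r-2)\bigr).
\end{equation*}
On each $\omega$ the change of variables $\tilde\theta = g^{M-1}(\theta)$, combined with Corollary~\ref{cor:cylinders} and the fact that $\hat Z^{(\omega)} := \varphi^{M-1}(\hat Y|_\omega)$ is admissible, reduces the summand, up to a uniform distortion constant, to $|\omega|\cdot \Leb\bigl(\tilde\theta \in (0,1] : \varphi(\hat Z^{(\omega)}(\tilde\theta))\in \hat J(r-2)\bigr)$. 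A direct single-curve application of Lemma~\ref{le:transversality} would give only the $\al$-dependent bound $O(\sqrt{|J(r-2)|/\al})$ flagged by the remark preceding the proposition. To remove this dependence I would observe that the explicit quadratic form of $f_\al$ forces the height $y^{(\omega)}$ of $\hat Z^{(\omega)}$ to lie within $O(\sqrt\al\, e^{-r})$ of $h^{-1}(\{0\}) = \{\pm\sqrt{a_0}\}$, otherwise no $\tilde\theta$ contributes. The total $|\omega|$-weight of the contributing indices is therefore bounded, up to the same distortion factor, by $\Leb\bigl(\theta : \hat Y_{M-1}(\theta)\in(0,1]\times S\bigr)$ for a strip $S$ around $\pm\sqrt{a_0}$ of width $\asymp \sqrt\al\, e^{-r}$.

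This reduces the estimate to one of the same shape as the original, but with the target strip placed around the non-critical points $\pm\sqrt{a_0}$ rather than $0$. Iterating the reduction $M-1$ times, I would follow the successive preimages of the strip under the deterministic one-dimensional Misiurewicz map $h(x) = a_0 - x^2$. Because the post-critical orbit of $h$ stays at a definite distance from the critical point $0$, the backward strips remain in the expanding region of $h$: each step contracts the strip by a uniform factor $\sigma_h > 1$ while splitting it into at most two components. Balancing geometric contraction against doubling, and choosing $M = M(\al) < N(\al) = O(\log(1/\al))$ so that contraction wins, one arrives at an estimate of order $\tilde C(\sqrt\al\, e^{-r})^\gamma$ for some $\gamma > 0$ independent of $\al$. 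The constraint $r\ge(\tfrac12 - 2\eta)\log(1/\al)$ then absorbs the residual $\sqrt\al$ factor into the $e^{-r}$ decay, producing the claimed bound $\tilde C e^{-5\beta r}$ with $\tilde C,\beta>0$ independent of $\al$.

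The main obstacle is closing the iterated pull-back with constants $\tilde C$ and $\beta$ independent of $\al$, while $M = M(\al)\to\infty$ as $\al\to 0$: one must uniformly sum over the countably many branches of $\mathcal P^{(M-1)}$, control the quadratic behavior of $h$ at preimage generations that might approach $0$ (excluded by the Misiurewicz assumption but requiring careful verification), and close the induction with all constants controlled. Robustness under the $\vep$-$C^3$-perturbation then follows from the openness of the bounded distortion property (Lemma~\ref{lem:open}), together with the openness of the Markov and Misiurewicz conditions under small $C^3$-perturbations.
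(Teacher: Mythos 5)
Your overall strategy is not the one the paper uses, and it contains a gap that I do not think can be closed. The paper's proof splits into two regimes: for $r\geq \frac32\log\frac1\al$ the direct transversality bound of Corollary~\ref{cor:iteration} already gives $C\al^{-1/4}e^{-(r-2)/2}\leq \tilde C e^{-5\beta r}$, and for the hard regime $(\frac12-2\eta)\log\frac1\al\leq r\leq\frac32\log\frac1\al$ it uses a purely combinatorial/probabilistic mechanism: a ``vertical displacement'' claim showing that every admissible curve, after one iterate, splits into two families of sub-curves (indexed by collections $\cP_1,\cP_2$ of partition elements, each of Lebesgue measure at least $\zeta=\frac1{16}$) that are vertically separated by at least $\al/100$; combined with Viana's expansion and positive-frequency estimates, at $k(r)\geq\ga_1 r$ distinct times $t_i\leq M$ a proportion at least $\zeta$ of the surviving branches is pushed out of $\hat J(r-2)$, and the quasi-Bernoulli property of $\Leb$ coming from the R\`enyi/bounded-distortion estimates turns this into the bound $[(1+O(\vep))(1-\zeta)]^{\ga_1 r}$. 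This displacement-at-positive-frequency idea is entirely absent from your argument, and it is precisely what produces a rate independent of $\al$.

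The concrete flaw in your substitute is the backward pull-back step. You claim that the successive preimages of the strip $J(r-2)$ under $h(x)=a_0-x^2$ ``remain in the expanding region of $h$'' and contract by a uniform factor $\sigma_h>1$, attributing this to the Misiurewicz assumption. But Misiurewicz controls the \emph{forward} orbit of the critical point; it says nothing about the \emph{backward} orbit of $0$, and the preimages of $0$ are dense in the dynamical core, hence return arbitrarily close to the critical point. Where a preimage branch lands near $0$, the inverse branch of $h$ behaves like a square root and \emph{expands} widths ($h^{-1}$ of an interval of length $w$ near the critical value has length $\asymp\sqrt{w}$), so there is no uniform geometric contraction to balance against the doubling of components. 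Moreover, even granting contraction, each backward step still requires estimating the measure of $\theta$ on an admissible curve entering a strip $S_j$, which by Lemma~\ref{le:transversality} carries the term $\sqrt{|S_j|/\al}$; in the hard regime $e^{-r}\asymp\al^{1/2-2\eta}$ one has $\sqrt{|J(r-2)|/\al}\asymp\al^{-\eta}\to\infty$, and your scheme never exhibits a mechanism that cancels this divergence. Your closing paragraph flags these points as ``requiring careful verification,'' but they are not technicalities: the needed verification fails, and a genuinely different idea (the paper's displacement/large-deviations argument, or an equivalent) is required.
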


\begin{proof}

Let $\hat Y$ be a fixed arbitrary admissible curve and set 
$\hat Y_j(\theta)=\varphi_\al^j(\hat Y (\theta)=(g^j(\theta),Y_j(\theta))$.
On the one hand using Corollary~\ref{cor:iteration} we deduce that
\begin{equation*}
\Leb \left(  \theta\in (0,1] :  \hat Y_j(\theta) \in \hat J(r-2) \right)
	\leq C \al^{-\frac12}\sqrt{|J(r-2)|}
	\leq C \al^{-\frac14} e^{-\frac12(r-2)},
\end{equation*}
which satisfies the assertion in the corollary 
with $\tilde C=Ce$ and $\beta=\frac1{15}$, provided that  $\frac{r}6 \geq \frac14 \log \frac1\al$. 
So, through the remaining we assume 
$\left(\frac{1}{2}-2\eta\right)\log\frac{1}{\al}\leq r \leq \frac32 \log \frac1\al$.  Let
$M=M(\al)$ be maximal such that $32^M\al \leq 1$, and note that $M< N$.
One can write
\begin{align*}
\Leb \left(  \theta\in (0,1] :  \hat Y_M(\theta) \in \hat J(r-2) \right)
	& = \sum_{\omega\in\cP^{(M)}} \Leb \left(  \theta\in \omega :  \hat Y_M(\theta) \in \hat J(r-2) \right) \\
	& = \sum_{\underline{s} \in S^M} 
			\Leb \left(  \theta\in \omega_{\underline{s}} :  \hat Y_M(\theta) \in \hat J(r-2) \right),
\end{align*}
where $\underline{s}=(s_1, s_2, \dots, s_M)\in S^M$ is the itinerary for the elements of $\cP^{(M)}$. 
The strategy is to subdivide  itineraries $\underline{s}=(s_1, s_2, \dots, s_M)\in S^M$ according to
its average depth $\sum_{i=1}^M s_i(\theta)$. On the other hand, we use a large deviations argument to
show that points with large average depth decrease exponentially fast. On the other hand, admissible curves 
associated to points with smaller average have vertical displacement.

\vspace{.2cm}
\noindent \emph{Claim:} There exists $\zeta\in(0,1)$ and for any admissible curve 
$\hat Z$ there are disjoint collections $\cP_1$ and $\cP_2$ of elements of $\cP$ such that the image 
$\hat Z_1(\theta)=\varphi(\hat Z)(\theta)=(g(\theta), Z_1(\theta))$ satisfies
$
|Z_1\mid_{\omega}-Z_1\mid_{\tilde\om}| \geq \frac{\al}{100}
$ 
for all $\omega\in\cP_1$ and $\tilde \om\in\cP_2$ 
and
$$
\zeta \leq \Leb\Big(\bigcup_{\omega\in \cP_1} \om \Big) 
	\leq \Leb\Big(\bigcup_{\omega\in \cP_2} \om \Big) 
	\leq 1-\zeta.
$$

\begin{proof}[Proof of the Claim:] 
It follows from Lemma~\ref{le:transversality} that the admissible curve $\hat Z_1$ has two critical points
$\theta'_1<\theta'_2$ one in each connected component of the set $(0,1]\setminus \cA=\{\theta\in(0,1] : |\sin(2\pi\theta)| > \frac13 \}$. Hence  $\frac{1}{2\pi}\arcsin(\frac13)<\theta'_1< \frac12 -\frac{1}{2\pi}\arcsin(\frac13)$  and also
$\frac12+ \frac{1}{2\pi}\arcsin(\frac13)<\theta'_2< 1 -\frac{1}{2\pi}\arcsin(\frac13)$.  

On the one hand, if $\theta_1',\theta_2' \notin [\frac14,\frac34]$ then $[\theta'_1,\theta'_1+\frac1{16})$ and  $(\theta'_2 -\frac1{16}, \theta'_2]$ are disjoint intervals that do not intersect the middle component $[\frac12 -\frac{1}{2\pi}\arcsin(\frac13), 
\frac12 +\frac{1}{2\pi}\arcsin(\frac13)]$ in $\cA$. Then, using that $\hat Z_1|_{[\theta'_1, \theta'_2]}$ is strictly 
monotone and $|Z_1'|_{\cA}|\geq \frac\al{2}$ 
$$
\inf Z_1|_{[\theta'_1,\theta'_1+\frac1{16}]} -\sup Z_1|_{[\theta'_2-\frac1{16}, \theta'_2]} 
	\geq \frac{\al}{2\pi}\arcsin(\frac13)
	\geq \frac{\al}{100}.
$$

On the other hand, if $\theta'_1\geq \frac14$ then $(0,\frac1{16}]$ and  $(\theta_1'-\frac1{16}, \theta_1']$ are 
disjoint intervals. Moreover, using that $Z_1|_{(0,\theta_1']}$ is strictly monotone and that 
$|Z_1''|_{(\frac{1}{16},\frac3{16}]}|\ge 4\al$ we obtain 
analogouly 
$
\inf Z_1|_{(\theta_1'-\frac1{16}, \theta_1']} -\sup Z_1|_{(0,\frac1{16}]} 
	\geq \frac{4\al}{64}
	\geq \frac{\al}{100}.
$
A similar reasoning holds for the case $\theta'_2\leq \frac34$. 

Since all partition elements of $\cP$ have length smaller or equal to $\frac1{16}$, there are 
collections $\cP'_1$ and $\cP'_2$ of elements in $\cP$ such that $\inf Z_1\mid_{\omega}-\sup Z_1\mid_{\tilde\om} 
\geq \frac{\al}{100}$ for all $\omega\in\cP'_1$ and $\tilde \om\in\cP'_2$.  In addition, using that 
$\cP_1'$ and $\cP'_2$ are disjoint we get
$$
1- \frac{1}{16}
	\geq \Leb\Big(\bigcup_{\omega\in \cP_i'} \om \Big)
	\geq \frac{1}{16}
$$
for $i=1,2$ and we set $\zeta=\frac{1}{16}$. Finally if
$\Leb(\bigcup_{\omega\in \cP_1} \om ) \leq \Leb(\bigcup_{\omega\in \cP_2} \om)$ we define
$(\cP_1,\cP_2):=(\cP'_1,\cP'_2)$ while otherwise $(\cP_1,\cP_2):=(\cP'_2,\cP'_1)$.
This finishes the proof of our claim.
\end{proof}

Let $\om,\tilde\om \in \cP^{(M)}$ be arbitrary with $\iota_M(\om)=(s_1, \dots, s_M)$ and $\iota_M(\tilde\om) =(\tilde s_1, \dots, \tilde s_M)$, and assume that $|Y_M(\theta)|<\sqrt{\al}$ for some $\theta$ since otherwise there is 
nothing to prove.  
We collect some facts whose proof can be found in~\cite[p.72-73]{Vi97}:
\begin{enumerate} 
\item (Expansion estimates) Given an arbitrary $\hat y\in \hat Y$ and $0\le j \leq M-1$
	$$
	\la_j:=|\partial_x f^{M-j} (\varphi_\al^j(\hat y))| 
		\geq C_2 \si_2^{M-j}
	$$
\item (Bounded distortion) For all $0\le j \le M-1$, all $(\theta_j,x_j) \in \hat Y_j$ and $1\le i \le M-j$
	it holds that
	$$
	\frac12 \frac{\la_j}{\la_{i+j}}
	\leq |\partial_x f^{i} (\theta_j,x_j) | 
	\leq 2 \frac{\la_j}{\la_{i+j}}
	$$
\item (Positive frequency) If $K=400 e^2$, $t_1=1$ and define recursively 
	$t_{i+1}=\min \{t_i<s\le M : \la_{t_i} \ge 2K \la_s\}$ then there exists $\ga_1>0$ (depending only
	on $\eta$) such that $k(r)=\max\{i : \la_{t_i} \ge 2\al^{-\frac12} Ke ^{-r}\}$ satisfies $k(r)\ge \ga_1 r$.
	\vspace{.2cm}
\item (Vertical displacement) For any $1\le i \le k$ and $(s_1, \dots, s_{t_{i}-1})$ there are collections
	$\cP_{1,i}$ and $\cP_{2,i}$ as in Claim~1 (with corresponding symbols $s^1_{t_i}, s^2_{t_i}$) for 
	which the admissible curves $\varphi^{t_i}(\om_{s_1, \dots, s^1_{t_{i}}}) $ and 
	$\varphi^{t_i}(\om_{s_1, \dots, s^2_{t_{i}}})$ satisfy
	$$
|\varphi^{t_i}(\om_{s_1, \dots, s_{t_i}-1, s^1_{t_{i}}})(\theta) - \varphi^{t_i}(\om_{s_1, \dots, s_{t_i}-1, s^2_{t_{i}}}(\theta)|
		\geq \frac{\al}{100}  
		\quad \text{ for all } \theta\in(0,1]	
	$$
	and, consequently, for any $(s_{t_i}+1, \dots, s_M) \in S^{M-t_i}$ 
	\begin{align*}
	|\varphi^{M}(\om_{s_1, \dots, s_{t_i}-1, s^1_{t_{i}},  s_{t_i}+1, \dots, s_M })(\theta) 
		 - \varphi^{M}  (\om_{s_1, \dots, s_{t_i}-1, s^1_{t_{i}}, s_{t_i}+1, \dots, s_M })(\theta)|  
		 \geq 4 \sqrt{\alpha} e^{-(r-2)}.
	\end{align*}
\end{enumerate}

Now we are in a position to finish the proof of Proposition~\ref{prop:main1}. By a small abuse of notation,
we write $s_{t_i}(\theta)\in\cP_{1,i}$ meaning that $\om_{s_{t_i}(\theta)} \in \cP_{1,i}$, and analogously for the
collection $\cP_{2,i}$.
In fact, one can combine Claim~1 with property (4) above  to show that 
$\Leb \left(  \theta\in (0,1] :  \hat Y_M(\theta) \notin \hat J(r-2) \right)$ is given by
\begin{align*}
\sum_{s_1 \in S}  & \sum_{(s_2,\dots, s_{M-1})\in S^{M-1}} 
			\Leb \left(  \theta\in \om_{s_1, \dots, s_M} :  \hat Y_M(\theta) \notin \hat J(r-2) \right) \\
	& \geq  \Leb \left( \theta :s_1(\theta) \in\cP_{1,1} \right) 
	 + \Leb \left( \theta :s_1(\theta) \notin\cP_{1,1} \text{ and } \hat Y_M(\theta) \notin \hat J(r-2) \right),
\end{align*}
since admissible segments over $\cP_{1,1}$ and $\cP_{2,1}$ correspond to vertically displaced
admissible curves when mapped by $\varphi^M$ and $\mu(\bigcup_{\omega\in \cP_{1,1}} \om ) \leq 
\mu(\bigcup_{\omega\in \cP_{2,1}} \om)$.
Analogously, the second term in the right hand-side above satisfies
\begin{align*}
\Leb ( \theta  \in (0,1]:\, &  s_1(\theta)  \notin\cP_{1,1} \text{ and } \hat Y_M(\theta) \notin \hat J(r-2) ) \\
	& = \sum_{s_1 \notin \cP_{1,1}}  \sum_{s_2,\dots, s_{t_2-1}} \sum_{ s_{t_2}\notin\cP_{1,2}} 
		\sum_{s_{t_2+1},\dots,s_M} 
		\Leb ( \theta : 
			\hat Y_M(\theta) \notin \hat J(r-2) ) \\
	& + \sum_{s_1 \notin \cP_{1,1}}  \sum_{s_2,\dots, s_{t_2-1}} \sum_{ s_{t_2}\in\cP_{1,2}} 
		\sum_{s_{t_2+1},\dots,s_M} 
		\Leb ( \theta : 
			\hat Y_M(\theta) \notin \hat J(r-2) ) \\
	& \geq \sum_{s_1 \notin \cP_{1,1}}  \sum_{ s_{t_2}\notin\cP_{1,2}} 
		\Leb ( \theta : s_1(\theta)  \notin\cP_{1,1} \text{ and } s_{t_2}(\theta)  \notin\cP_{1,2}) \\
	& + \Leb ( \theta : s_1(\theta)  \notin\cP_{1,1} \text{ and } s_{t_2}(\theta)  \in\cP_{1,2} \text{ and } \hat Y_M(\theta) 
	\notin \hat J(r-2)) 
\end{align*}
Proceeding recursively we obtain that
\begin{align*}
\Leb \big(  \theta :  \hat Y_M(\theta) & \notin  \hat J(r-2) \big) \\
		& \geq \Leb \left( \theta :s_1(\theta) \in\cP_{1,1} \right) 
		 + \Leb \left( \theta :s_1(\theta) \notin\cP_{1,1} \text{ and } s_{t_2}(\theta) \in\cP_{1,2} \right) \\
		& + \Leb \left( \theta :s_1(\theta) \notin\cP_{1,1} \text{ and } s_{t_2}(\theta) \notin\cP_{1,2} \text{ and } 
		s_{t_3}(\theta) \in\cP_{1,3}
			\right) \\
		& + \dots \\
		& + \Leb \left( \theta :s_{t_i}(\theta) \notin\cP_{1,i}, \forall 1\leq i \leq k(r)-1 \text{ and } s_{t_{k(r)}}(\theta) \in\cP_{1,k} 
			\right),
\end{align*}
proving 
$\Leb \left(  \theta\in (0,1] :  \hat Y_M(\theta) \in \hat J(r-2) \right) 
	 \leq \Leb \left(  \theta :  s_{t_i}(\theta) \notin\cP_{1,i}, \forall 1\leq i \leq k(r) \right)$.
Hence, to finish the proof it is enough to prove that the previous right hand side decreases 
exponentially fast on $r$. 
Since $\varphi(\theta,x)=(\tilde g(\theta),\tilde f(\theta,x))$ where $\|g-\tilde g\|_{C^3}<\vep$ and 
$g$ is piecewise linear satisfying $|g'|\ge d$ then it follows from 
Lemmas~\ref{lem:open} and~\ref{lem:bdistortion}  that
\begin{align*}
\frac{\Leb(\om_{(s_1,s_2, \dots, s_{k+\ell})})}
	{\Leb(\om_{(s_1,s_2, \dots, s_k)})}
	&\leq
	\frac{\Leb(\tilde g^k(\om_{(s_1,s_2, \dots, s_{k+\ell})}))}
	{\Leb(\tilde g^k(\om_{(s_1,s_2, \dots, s_k)}))} 
	\exp\Big(  \frac{d\vep}{(d-1)(d-\vep)^2}  \Big)^2 \\
	& = \Leb(\om_{(s_{k+1},s_{k+2}, \dots, s_{k+\ell})})
	\exp\Big(  \frac{d\vep}{(d-1)(d-\vep)^2}  \Big)^2
\end{align*} 
and consequently
\begin{align*}
\Leb(\om_{(s_1, \dots, s_{k+\ell})})
	\leq \Leb(\om_{(s_1,\dots, s_k)}) \Leb(\om_{(s_{k+1}, \dots, s_{k+\ell})})
		\exp\Big(  \frac{d\vep}{(d-1)(d-\vep)^2}  \Big)^2
\end{align*} 
for any $k+\ell \ge 1$ and sequence $(s_1,s_2, \dots, s_{k+\ell})\in S^{k+\ell}$. Thus if
$\vep$ is small, the previous estimates together with $k(r)\geq \ga_1 r$ yield that
\begin{align*}
\Leb \left(  \theta :  s_{t_i}(\theta) \notin\cP_{1,i}, \forall 1\leq i \leq k(r) \right)  
	& =\sum_{\{(s_1,s_2, \dots, s_M) : s_{t_i} \notin\cP_{1,i} \}}   
			\Leb(\om_{(s_1,s_2, \dots, s_M)}) \\
	& \leq \exp\Big(  \frac{d\vep}{(d-1)(d-\vep)^2}  \Big)^{2k(r)} \prod_{j=1}^{k(r)} \left[ 1- \Leb(\cP_{1,i}) \right] \\
	&  \leq \left[\exp\Big(  \frac{2d\vep}{(d-1)(d-\vep)^2}  \Big) (1-\zeta)\right]^{k(r)} \\
	& \leq \left[\exp\Big(  \frac{2d\vep}{(d-1)(d-\vep)^2}  \Big) (1-\zeta)\right]^{\ga_1 r}.
\end{align*}
This proves that  $\Leb \left(  \theta\in (0,1] :  \hat Y_M(\theta) \in \hat J(r-2) \right)$ decreases exponentially fast
in $r$ provided that $\vep>0$ is small, and finishes the proof of the proposition.
\end{proof}

Observe that our hypothesis yield that the base map is (semi)-conjugated to the on-sided shift with 
alphabet $S$. Let us point out that a much simpler argument would lead the same result above provided 
if the $\mu$ is Bernoulli, that is, defined by mass distribution.

\subsection{Positive Lyapunov exponents}\label{subsec:largedeviations}

We are now in a position to prove Theorem~\ref{thm:Main.Estimate} similarly to \cite{Vi97}. 
First we consider the skew-product $\varphi_\al$.
Let $\ga\in(0,1)$ be arbitrary and fixed.
For any integer $n\ge 1$ set $m=[\sqrt{n}]$ and $\ell=m-M$, where $[\cdot]$ stands as before for 
the integer part. Given an admissible curve $\hat Y$ and $v\in \mathbb R^2$ non-colinear with $\partial/\partial x$
there is $C>0$ so that 
$
\| D\varphi^n_\al(\hat Y(\theta)) v \|
	 \geq C |(g^n)'(\theta)| 
	 \geq C d^n 
$
grows exponentially fast. Hence, it remains to estimate the derivative
\begin{align*}
 \left\| D\varphi^n_\al(\hat Y(\theta))\frac{\partial}{\partial x} \right\|
	 = \prod_{j=0}^{n-1} \left| \frac{\partial f}{\partial x} (\hat Y_j (\theta)) \right|, 
\end{align*}
where the later product can be estimated according to the returns near the critical region using
Proposition~\ref{prop:uniform.expansion}. 
We will say that $1\leq \nu \leq n$ is a \emph{deep return for $\theta$}
if $\theta\in \om$ for some partition element $\om\in\cP^{(\nu+\ell)}$ satisfying 
$\varphi^\nu(\hat Y\mid_\om) \cap ((0,1] \times J( m)) \neq \emptyset$.
We will say that $1\leq \nu \leq n$ is a \emph{regular return for $\theta$} if
$\theta\in \om$ where $\om\in\cP^{(\nu+\ell)}$ satisfies
$\varphi^\nu(\hat Y\mid_\om) \cap ((0,1] \times J(0))\neq \emptyset$ and
$\varphi^\nu(\hat Y\mid_\om) \cap ((0,1] \times J( m))= \emptyset$. In this case 
we set the \emph{return depth} 
$
r_\nu(\theta)=\min\{r < m : \varphi^\nu(\hat Y\mid_\om) \cap ((0,1] \times J(r))\neq \emptyset\}.
$
Observe that the function $r_\nu(\cdot)$ is constant on the elements of the partition $\cP^{(\nu+\ell)}$.
Moreover, since $\varphi^\nu(\hat Y\mid_\om)$ is a curve with slope smaller or equal to $\al$ and 
horizontal length smaller or equal to $16^{-\frac{\sqrt{n}}2}$, if $\nu$ is a deep return 
then $\varphi^\nu(\hat Y\mid_\om) \subset ((0,1] \times J( m-1))$. In consequence,
from Corollary~\ref{cor:iteration} we get
\begin{align*}
\Leb (\theta\in & (0,1] :  \exists 1\leq \nu \leq n \text{ deep return for } \theta) \nonumber \\
	& \leq n \, \Leb(\theta\in (0,1] :  \hat Y_\nu(\theta) \in ((0,1] \times J(m-1))) \nonumber \\
	& \leq n\, C \al^{-\frac14} e^{-\frac{m}2} \\
	& \leq \al^{-\frac14} e^{-\frac13 \sqrt{n}} \nonumber 
\end{align*}
for all large $n$. In addition, if $\theta\in (0,1]$ has no deep returns and $1\leq \nu_1 < \nu_2 < \dots <\nu_s \leq n$
are the regular returns for $\theta$ with return depths $r_1, \dots, r_s$ respectively, then 
it follows from the estimates in \cite[p. 76]{Vi97} that for all large $n$
\begin{equation}\label{eq:Lyapunov.exp}
\log \left\| D\varphi^n_\al(\hat Y(\theta))\frac{\partial}{\partial x} \right\|
	\geq 2c n -\sum_{i\in G_\theta} r_i(\theta),
\end{equation}
where $c=\frac13 \min\{\gamma_2,\log\si_2\}>0$ and 
$G_\theta=\{1\leq i \leq s : r_{\nu_i} (\theta) \geq (\frac12 -\eta)\log \frac1\alpha\}$.
Therefore, if $G_\theta(q)=\{i : \nu_i \equiv q (\!\!\!\mod m)\} $ (for $0\leq q<m$) we obtain that
\begin{align*}
\Leb\left(\theta\in (0,1] :  \sum_{i\in G_\theta} r_i(\theta) \geq cn\right) 
	& \leq \sqrt{n} \Leb\left(\theta\in (0,1] : \sum_{i\in G_\theta(q)} r_i(\theta) \geq \frac{cn}m\right) \nonumber\\
	& = \sqrt{n} \sum_{R\geq \frac{cn}m} \sum_{\substack{(\rho_1, \dots, \rho_\tau)\\ \sum \rho_j =R}} 
		\Leb\left(\theta : r_{\nu_j}(\theta) =\rho_j, \; \forall j \right) \nonumber\\
	& \leq \sqrt{n} \sum_{R\geq \frac{cn}m} \left( \begin{array}{c} R+\tau \\ \tau \end{array} \right) 
		\tilde C^{\tau} e^{-5\beta \sum_j \rho_j},
\end{align*}
where $\tau$ denotes the number of nonzero depths $r_j$. Using that $R\geq \tau (\frac12-\eta)\log\frac1\al$
we can 
take $\al$ small so that  $\frac{ (R+\tau)!}{R! \, \tau!} \, \tilde C^{\tau} \leq e^{\beta R}$ and since 
\begin{equation*}\label{eq:tail}
\Leb\Big(\theta\in (0,1] :  \sum_{i\in G_\theta} r_i(\theta) \geq cn\Big) 
	\leq \sqrt{n} \sum_{R\geq c \,\frac{n}m}  e^{-4\beta R}
	\leq e^{-\beta \sqrt{n}}
\end{equation*}
for all large $n$, it is summable. Borel-Cantelli lemma yields that for Lebesgue almost every $\theta$ the
expression $\sum_{i\in G_\theta} r_i(\theta) \le cn$ holds for all but finitely many values of $n$.
Together with equation~\eqref{eq:Lyapunov.exp} above and using that $\hat Y$ was chosen arbitrary,  
this proves that $\varphi_\al$ has only positive Lyapunov exponents, that is,
$$
\liminf _{n\to\infty}\frac1n \log \|D\varphi_\al^n(\theta,x) v \|\geq c 
$$
for Lebesgue almost every $(\theta,x)$ and every $v\in \mathbb R^2$, 
proving the assertion of Theorem~\ref{thm:Main.Estimate} for the skew-product $\varphi_\al$.
Now, let $\varphi$ be $\vep$-$C^3$-close to $\varphi_\al$. For $\varphi$ let the critical region $\mathcal C$
be defined by $(\theta,x) \in \mathcal C$ if and only $\det D\varphi_\al(\theta,x)=0$. It is not hard 
to deduce from the Implicit Function Theorem that $\mathcal C$ is a $C^2$-smooth curve
on each invertibility domain, that is, there exists a function $\eta: (0,1] \to I_0$ that is $C^2$-close to 
zero on each interval $\omega_i$ satisfying $\mathcal C=\text{graph}(\eta)$.
Thus one can make a $C^2$ change of coordinates and assume that the critical region 
$\mathcal C$ coincides with the segment $\{x=0\}$. Moreover, since $\partial_x f(\theta x)=2x$ 
we may assume $\partial_x \tilde f(\theta,x)=|x| \psi(\theta,x)$ with $\psi$ close to $2$, behaves like a power 
of the distance to the critical region. This allows to reproduce the previous argument and to show that $\varphi$
has two positive Lyapunov exponents and finishes the proof of Theorem~\ref{thm:Main.Estimate}.

\section{{SRB} measures and their statistical properties}\label{sec:proofs}

This section is devoted to the study of ergodic properties of these robust nonuniformly expanding transformations. 
In fact we show that there is a unique SRB measure and prove that it has good statistical properties. Throughout
let $\varphi$ be $C^3$-close to $\varphi_\al$ and let $\La$ denote the corresponding attractor.
We will say that $\varphi$ is  \emph{topologically exact} if for any open set $U$ there exists $N=N(U)\ge 1$ 
such that $\varphi^N(U)\supset \La$. We say that $\varphi$ is \emph{ergodic} (with respect to Lebesgue) 
if all $f$-invariant measurable sets are zero or full Lebesgue measure sets. 

\begin{proposition}\label{prop:exact}
The map $\varphi$ is topologically exact and ergodic with respect to $\Leb$.
\end{proposition}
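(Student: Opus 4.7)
The two assertions split naturally. For topological exactness I combine the Markov full-branch structure of $g$ --- which spreads any base interval to cover $(0,1]$ in finitely many steps --- with the topological exactness of the unperturbed Misiurewicz quadratic $h(x)=a_0-x^2$ on $[h^2(0),h(0)]$, a property which transfers to the fibered composition provided $\alpha$ is small. Ergodicity is then obtained by the by-now-standard non-uniformly expanding argument that combines topological exactness with bounded distortion at hyperbolic times, whose a.e.\ existence is guaranteed by the proof of Theorem~\ref{thm:Main.Estimate}.

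For topological exactness, given an open $U$ meeting $\Lambda$, I first extract a product rectangle $J\times K\subset U$. Since $g$ is Markov expanding with full branches $g(\omega_i)=(0,1]$ and expansion $\ge d\ge 16$, there is $n_1\ge 1$ and some $\omega\in\cP^{(n_1)}$ with $\omega\subset J$ such that $g^{n_1}|_\omega$ is a diffeomorphism onto $(0,1]$. The image $\varphi^{n_1}(\omega\times K)$ then projects onto $(0,1]$ in the base, and its fiber over each $\theta'\in(0,1]$ is the interval $f^{(n_1)}((g|_\omega)^{-1}(\theta'),K)$. For $\alpha$ small, $f^{(n)}(\theta,\cdot)$ is $C^0$-close to $h^n$ on $I_0$ uniformly in $\theta$, and since $h$ is topologically exact on $[h^2(0),h(0)]\subset I_0$ one can choose a further iterate $n_2=n_2(|K|)$ so that each of these fiber intervals is stretched to fill $I_0$. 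Hence $\varphi^{n_1+n_2}(U)\supset\Lambda$.

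For ergodicity, let $A\subset\Lambda$ be $\varphi$-invariant with $\Leb(A)>0$. The hyperbolic-time machinery underlying the proof of Theorem~\ref{thm:Main.Estimate} provides, for Lebesgue-a.e.\ $z\in A$, infinitely many times $n_k$ along which there is a neighborhood $V_{n_k}\ni z$ on which $\varphi^{n_k}$ has bounded distortion and whose image $D_k=\varphi^{n_k}(V_{n_k})$ contains a topological disk of radius bounded below by some uniform $\delta_0>0$. Choosing $z$ to be a Lebesgue density point of $A$, bounded distortion gives $\Leb(A\cap D_k)/\Leb(D_k)\to 1$; passing to a Hausdorff subsequential limit produces a disk $D_\infty$ of radius $\delta_0$ with $A$ of full Lebesgue measure in $D_\infty$. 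Topological exactness applied to the interior of $D_\infty$ yields $N$ with $\varphi^N(D_\infty)\supset\Lambda$, and since $\varphi^N$ is piecewise smooth the image $\varphi^N(A\cap D_\infty)\subset A$ has full Lebesgue measure in $\Lambda$, whence $\Leb(A)=\Leb(\Lambda)$. The main delicacy in the whole argument lies in the uniformity-in-$\theta$ of the fiber-growth step: the topological exactness of $h$ must survive both the $\alpha$-perturbation and the $\theta$-varying composition of nearby quadratics, which forces us to fix $|K|$ first and then take $\alpha$ small enough, and to keep track of how the discontinuities of $g$ interact with the partition refinement $\cP^{(n_1)}$.
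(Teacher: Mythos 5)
The paper does not actually prove this proposition: it defers entirely to \cite[Theorem~C]{AV02}, so there is no written argument to compare against line by line. Your outline is in the spirit of that reference, but the topological exactness half contains a genuine gap that you yourself flag without resolving: the order of quantifiers in the fiber-growth step. Topological exactness must hold for \emph{every} open $U$ with the parameter $\al$ \emph{already fixed}; your argument needs $n_2=n_2(|K|)\to\infty$ as $|K|\to 0$, and the $C^0$-distance between $f^{(n)}(\theta,\cdot)$ and $h^n$ is not small uniformly in $n$ --- it grows roughly like $\al\sum_{j<n}\|Dh^j\|_\infty$, i.e.\ exponentially in $n$ --- so for a fixed $\al$ the approximation by $h^{n_2}$ is worthless once $|K|$ is small. "Fix $|K|$ first and then take $\al$ small" inverts the quantifiers and proves a weaker statement (exactness only for rectangles of a prescribed minimal height). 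The correct route, which is the one in \cite{AV02} and \cite{Al00}, is to first show that an arbitrarily small fiber interval carried by an admissible curve \emph{grows to a fixed macroscopic size} $\de_1>0$ under iteration --- this uses the expansion and bound-period estimates of Proposition~\ref{prop:uniform.expansion} (items (3)--(6)), not $C^0$-closeness of the compositions, precisely because a passage through the critical region shrinks the interval quadratically and one must recover that loss dynamically --- and only then invoke the locally-eventually-onto property of the Misiurewicz map on intervals of size $\ge\de_1$, where the number of further iterates is uniformly bounded and the $\al$-smallness can be fixed once and for all before $U$ is chosen. Without that intermediate growth lemma the exactness claim is not established.

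The ergodicity half is essentially the standard density-point argument at hyperbolic times and is sound in outline, with two caveats. First, hyperbolic times whose associated neighborhoods have images of uniform size and bounded distortion require not only the positive exponents of Theorem~\ref{thm:Main.Estimate} but also the slow recurrence condition to the critical set (the condition labelled (SR) in Section~\ref{sec:proofs}); you should cite that input explicitly, since positivity of the exponents alone does not give the uniform $\de_0$ you use. Second, the step $\varphi^N(A\cap D_\infty)\subset A$ with $\Leb(\varphi^N(A\cap D_\infty))$ full in $\La$ needs the observation that $\varphi^N$ is non-singular (its Jacobian vanishes only on the critical curve, a null set) so that full-measure subsets of $D_\infty$ are sent to full-measure subsets of $\varphi^N(D_\infty)$; this is true here but should be said, as it is exactly the kind of point that fails for general smooth maps.
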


\begin{proof}
 Since the proof follows closely \cite[Theorem~C]{AV02} we will omit the details.
\end{proof}

At this point one could use \cite{ArS11} to obtain the existence of the absolutely continuous
invariant probability measure. However, to deduce the good statistical properties in 
Theorem~\ref{thm:SRBmeasure} and to deal with the critical set $\mathcal C$ and 
discontinuities $\mathcal D$ for $\varphi$ (formed by countable vertical segments) 
we need to estimate the tail of the hyperbolic times c.f. \cite[Definition~2.5]{Al00}. 
Since the arguments follow some now standard arguments we focus on the main ingredients.

Using that $\varphi$ is $C^3$-close to $\varphi_\al$, the critical region $\mathcal C$
obtained is a piecewise $C^2$-smooth curve where $\mathcal C=\{(\theta,x) : \det D\varphi(\theta,x)=0\}$.
Hence we may assume that $\mathcal C$ coincides with the segment $\{x=0\}$
and that $\varphi$ behaves like a power of a distance to the critical region along the invariant vertical foliation: 
there exists $B\ge 1, \be>0$ so that for all $(\theta,x)\in(0,1]\times I_0$ and all $v\in \mathbb R^2$
\begin{itemize}
\item[(C1)]  $\frac1B \text{dist}((\theta,x),\cC) \le \frac{\|D\varphi(\theta,x) v\|}{\|v\|}$
\end{itemize}
and for all points with  
$\dist((\theta_1,x_1), (\theta_2,x_2))<\text{dist}((\theta_1,x_1), \cC)/2$
\begin{itemize}
\item[(C2)] $|\; \log \|D\varphi(\theta_1,x_1)^{-1}\| - \log \|D\varphi(\theta_2,x_2)^{-1}\| \; |
			\leq B \frac{\dist((\theta_1,x_1), (\theta_2,x_2))}{\text{dist}((\theta_1,x_1), \cC)^\beta}$
\item[(C3)] $|\; \log |\det D\varphi(\theta_1,x_1) - \log |\det D\varphi(\theta_2,x_2) | \; |
			\leq B \frac{\dist((\theta_1,x_1), (\theta_2,x_2))}{\text{dist}((\theta_1,x_1), \cC)^\beta}$.
\end{itemize}
This will be used to control recurrence to the critical region $\mathcal C$. A first step to deduce stretched-exponential 
decay of correlations using the machinery developed in \cite{You98,ALP05,Gou06} we need to obtain non-uniform
expansion together with slow recurrence condition to both the critical region $\mathcal C$. Notice that 
\begin{align}\label{eq:nue1}\tag{NUE}
\liminf_{n\to\infty}  \frac1n \sum_{j=0}^{n-1} \log \|D\varphi(\varphi^j(\theta,x))^{-1} \|^{-1}
	 = \liminf_{n\to\infty}  \frac1n \log \big\|D\varphi^n(\theta,x) \frac{\partial}{\partial x} \big\|
	 \ge c>0 
\end{align}
and 
\begin{equation}\label{eq:nue2}\tag{SR}
(\forall \vep>0) \; (\exists\de>0) \quad
\limsup_{n\to\infty} \frac1n \sum_{j=0}^{n-1} -\log \text{dist}_\de(\varphi^j(\theta,x), \cC) <\vep
\end{equation}
holds for Lebesgue almost every $(\theta,x)$, where $\text{dist}_\de(z,\cC)=\text{dist}(z,\cC)$ 
if $\text{dist}(z,\cC)<\de$ and $\text{dist}_\de(z,\cC)=0$ otherwise. 
In fact, on the one hand
\begin{align*}
\cE_v(\theta,x)
	& :=\min\Big\{N\ge 1 : \frac1n \sum_{j=0}^{n-1} \log \|D\varphi(\varphi^j(\theta,x))^{-1} \|^{-1} \ge c, 
		\text{ for all }  n\ge N\Big\} \\
	& \le \min \Big\{N\ge 1 : r_j (\theta,x) \le c n 
		\; (\forall 1\le j \le n)   
		\text{ and }   \sum_{j=0}^{n-1} r_i(\theta,x) \le c n 
		\; (\forall n\ge N) \Big\},
\end{align*}
is well defined and finite for Lebesgue almost every point. Furthermore, it follows that 
$
\Leb( (\theta,x) \in(0,1]\times I_0 : \cE_v(\theta,x)\ge n) 
	\leq C e^{-\ga \sqrt{n}}
$
for all large $n$. On the other hand, given $\vep,\de>0$ consider the Lebesgue almost everywhere well defined function
\begin{align*}
\cR_{v,\vep,\de}(\theta,x)
	& :=\min\Big\{N\ge 1 : \sum_{j=0}^{n-1} -\log \text{dist}_\de(\varphi^j(\theta,x), \cC) <\vep \, n 
		\text{ for all }  n\ge N\Big\}.
\end{align*}
Notice that if $\de=(\frac12-2\eta)\log\frac1\al$ then 
$\sum_{j=0}^{n-1} -\log\text{dist}_\de(\varphi^j(\theta,x), \cC) \le \sum_{j=0}^{n-1} r_j(\theta,x)$
for all $(\theta,x)$. Hence, the same large deviations argument of Section~\ref{sec:positive.exp} yield that there exists  $\ga(\vep)>0$ such that
\begin{align*}
\Leb( (\theta,x) \colon \cR_{v,\vep,\de}(\theta,x)\ge n) 
	 \le \Leb\Big(\theta,x) \colon \sum_{j=0}^{n-1} r_j(\theta,x)>\vep n \Big)
	 \leq C e^{-\ga(\vep) \sqrt{n}}
\end{align*}
for all large $n$. In consequence, this proves that for any $\vep,\de>0$ there exists 
$\tilde \ga(\vep)=\min\{\ga,\ga(\vep)\}$ such that
$$
\Leb( (\theta,x) \in(0,1]\times I_0 : \cE_v(\theta,x)\ge n \text{ or } \cR_{v,\vep,\de}(\theta,x)\ge n ) 
	\leq C e^{-\tilde \ga(\vep) \sqrt{n}}
$$
for all large $n$.  On the one hand, despite the discontinuities, it follows from the Markov assumption and
bounded distortion Lemma~\ref{lem:bdistortion} that for any partition element $\om\in \cP^{n}$ the map
$g^n\mid_\om: \om \to (0,1]$ has bounded distortion and the backward contraction property
$
d( g^{n-j} (y), g^{n-j} (z)) \le (d-\vep)^{-j} d(g^n(y), g^n(z))
$
for all $0\le j\le n$ and $y,z\in \om$. In consequence, $g$ admits a unique absolutely continuous
invariant probability measure $\eta$ equivalent to Lebesgue.
Therefore, since $\varphi$ is a skew product, if
$
\cR_{h,\vep,\de}(\theta,x)
	 \!\!=\!\!\min\big\{N\ge 1 : \sum_{j=0}^{n-1} -\log \text{dist}_\de(\varphi^j(\theta,x), \mathcal D) <\vep \, n 
		\text{ for all }  n\ge N\big\}
$
and also
$
\cR_{g,\vep,\de}(\theta)
	 \!\!=\!\!\min\big\{N\ge 1 : \sum_{j=0}^{n-1} -\log \text{dist}_\de(g^j(\theta), \pi_1(\mathcal D)) <\vep \, n 
		\text{ for all }  n\ge N\big\}
$
then it follows that
\begin{align}\label{eq:deviations}
\Leb( (\theta,x) : \cR_{h,\vep,\de}(\theta,x) >n  )
	& = \Leb ( \theta\in (0,1] :  \cR_{g,\vep,\de}(\theta)>n)
\end{align}
where $\pi_1$ stands for the projection on the $\theta$-coordinate.
Furthermore, since $\eta$ is equivalent to Lebesgue and has exponential decay of correlations
it yields exponential large deviations for H\"older observables.  Thus, 
we can proceed as in \cite{AS13} (building over \cite{ALFV11}) to obtain an exponential large deviation
estimate for \eqref{eq:deviations} and, in consequence, sub-exponential tails for the expansion and 
return time functions. 
Thus, it follows from \cite{Gou06} that there exists a $\varphi$-invariant 
absolutely continuous probability measure $\mu$ with stretched-exponential decay of correlations.
In consequence,

\begin{proposition}\label{prop:existence.uniqueness}
There exists a unique SRB measure $\mu$ for $\varphi$. Moreover, the basin of attraction $B(\mu)$ contains Lebesgue almost every point in $\La$.
\end{proposition}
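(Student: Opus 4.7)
The heavy lifting is already in place: we have non-uniform expansion and slow recurrence with subexponential tails, which via the Young-tower / induced Markov map construction of \cite{Gou06} (or equivalently via \cite{ALP05,ArS11}) yields an absolutely continuous $\varphi$-invariant probability measure $\mu$ with stretched-exponential decay of correlations. What remains is ergodicity, uniqueness, and the statement on the basin. My plan is to extract all three from the Lebesgue-ergodicity of $\varphi$ supplied by Proposition~\ref{prop:exact}, together with standard bookkeeping.

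First I would argue that any absolutely continuous $\varphi$-invariant probability measure $\nu$ is automatically ergodic. Indeed, if $A$ is a $\varphi$-invariant Borel set with $\nu(A)>0$, then $\nu\ll\Leb$ gives $\Leb(A)>0$, and Proposition~\ref{prop:exact} forces $\Leb(A^c)=0$, so $\nu(A^c)=0$ and $\nu(A)=1$. In particular the measure $\mu$ produced above is ergodic.

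For uniqueness, assume $\mu_1,\mu_2$ are two absolutely continuous $\varphi$-invariant probabilities. Both are ergodic by the previous step, so by Birkhoff's theorem each basin $B(\mu_i)$ satisfies $\mu_i(B(\mu_i))=1$, hence $\Leb(B(\mu_i))>0$. Each basin is $\varphi$-invariant (a tail event of shifted empirical measures) and measurable (test against a countable dense family in $C^0$), and so by Lebesgue-ergodicity $\Leb(B(\mu_i)^c)=0$. Consequently $B(\mu_1)\cap B(\mu_2)$ has full Lebesgue measure, in particular is nonempty; since a point lies in at most one basin, $\mu_1=\mu_2$. The same invariance argument applied to $B(\mu)$ itself then gives $\Leb(B(\mu)^c)=0$, which proves that $B(\mu)$ contains Lebesgue almost every point of $(0,1]\times I_0$ and hence a fortiori of $\Lambda\subset(0,1]\times I_0$.

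There is no serious obstacle here; the only point requiring a moment's care is that the definition of $B(\mu)$ via weak-$*$ convergence is a genuinely measurable and $\varphi$-invariant set, which as noted is handled by a standard separability argument. All the analytic difficulty was absorbed into the non-uniform expansion and recurrence tail estimates of Section~\ref{sec:positive.exp} and the preceding paragraphs, together with the topological-exactness / Lebesgue-ergodicity input of Proposition~\ref{prop:exact}.
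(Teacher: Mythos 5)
Your argument is correct and follows essentially the same route as the paper: take the absolutely continuous measure produced by the tail estimates and \cite{Gou06}, use the Lebesgue-ergodicity of Proposition~\ref{prop:exact} to upgrade the $\varphi$-invariant, positive-Lebesgue-measure basin to a full-Lebesgue-measure set, and deduce uniqueness from the disjointness of basins. The only cosmetic point is that your uniqueness step is phrased for pairs of \emph{absolutely continuous} invariant measures, whereas the claim is uniqueness among \emph{SRB} measures; since an SRB measure by definition already has a basin of positive Lebesgue measure, the identical basin-intersection argument disposes of that case without needing the ergodicity-from-absolute-continuity step.
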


\begin{proof}
Let $\mu$ be the $\varphi$-invariant and ergodic probability measure constructed above. Since 
$\mu\ll \Leb$ then it is clearly an SRB measure because its basin of attraction
$$
B(\mu)=\Big\{(\theta,x)\in \La : \frac1n \sum_{j=0}^{n-1} \de_{\varphi^j(\theta,x)} \to \mu\Big\}
$$
is an $\varphi$-invariant set and a $\mu$-full measure set. Using that $\varphi$ is exact
then it follows that $B(\mu)$ has full Lebesgue measure set in $\La$. This also proves uniqueness 
of the SRB measure and finishes the proof of the proposition.
\end{proof}

Finally, it follows from \cite{ALFV11} that stretched-exponential decay of correlations imply on
stretched-exponential large-deviation bounds and also in the central limit theorem, almost sure invariance
principle, local limit theorem and Berry-Esseen theorem. This finishes the proof of Theorem~\ref{thm:SRBmeasure}.

\section{Coexistence of positive and negative Lyapunov exponents for generic generalized Viana-maps}\label{sec:periodic}

This section is devoted to the proof of Theorem~\ref{thm:generic.skew} where, in particular, we prove that 
generic generalized Viana maps have a dense set of points with negative central Lyapunov exponent. We 
will make use of the density of hyperbolicity for maps of the interval in Theorem~\ref{thm:KSV}.

\begin{proof}[Proof of Theorem~\ref{thm:generic.skew}]
Let $\mathcal V$ be an open set of generalized Viana maps.  By construction every $\varphi\in\mathcal V$ is conjugated to a skew-product over the same (topological) Markov expanding map $g$. Moreover, using that 
$g$ has a fixed point $p$ it follows that the map $\varphi$ preserves the fiber over one fixed point $p_*=p_*(\varphi)$.
In other words, one can consider the unimodal map 
$$
\varphi_{p_*}(\cdot)= \varphi(p_*,\cdot) : \{p_*\} \times I_0 \to \{p_*\} \times I_0.
$$
By topological conjugacy the continuation of the point $p_*(\varphi)$ is well defined for all $\varphi$.
Using Theorem~\ref{thm:KSV}, we deduce that there exists an open and dense set  $\cA\subset \mathcal V$ 
such that for any $\varphi\in \cA$ the corresponding unimodal map $\varphi_{p_*}$ is hyperbolic.
In consequence, $\varphi_{p_*}$ admits a periodic attractor whose the basin of attraction s a full Lebesgue 
measure set, it is  open and dense. Therefore, $\varphi$ has a periodic saddle $P_*$ on the fiber over $p_*$.

Now, notice that the pre-orbit $\cO^{-}(p_*)=\{ \theta\in (0,1] : (\exists n\ge 1) \; g^{n}(\theta)=p_* \}$ is dense
in $(0,1]$ and so the set $D= \{(\theta,x) : \theta\in \cO^{-}(p_*), x\in I_0  \}$ is dense in the attractor 
$\Lambda(\varphi)$.
Since the fibered maps are non-singular with respect to Lebesgue then there exists a dense subset of points
in $D$ that are forward asymptotic to the saddle point $P_*$ and, consequently, have one negative Lyapunov exponent as claimed in part (1). Since the second assertion follows directly from  Theorem~A this finishes the 
proof of the theorem. 
\end{proof}

\section{SRB measures for skew-products of fibered nearby hyperbolic interval maps}\label{sec:SRBs}

The main purpose of this section is to prove Theorem~\ref{thm:hiperbolicidade.fibrada} for fibered maps
obtained by perturbation of hyperbolic interval maps, which is of independent interest. So, let $T: I \to I$ be a  hyperbolic $C^3$-interval map with negative Schwarzian derivative in the interval $I$ and 
 $S:X \to X$ be a continuous map that admits a unique SRB measure $\mu_S$, where  $X$ is a compact 
 Riemannian manifold. Consider the skew product 
$$
\psi : (x,y)\mapsto (S(x), T(y)+a(x))
$$
associated to a $C^3$-function $a: X \to \mathbb R$.  Given a point $x\in X$, consider the iteration 
$\psi_x^n=\psi_{T^{n-1}(x)} \circ\dots\circ \psi_{T(x)}\circ \psi_x$. 
First we proceed to prove that if $\|a\|_{C^3}<\vep$, for a small $\vep$, then the skew product $\psi$ admits a
trapping region $\cU$. Moreover 
we prove that the attractor $\mathcal G\subset \cU$ supports a unique SRB measure 
$\nu$, ergodic,  whose basin of attraction contains Lebesgue almost every
point in $\cU$.

By hyperbolicity,  $T$ has a a finite number of periodic attracting points $\{p_i\}$ of period 
$\pi(p_i)\ge 1$ and $I=K\cup (\cup_i B(p_i))$, where $B(p_i)$ denotes the topological basin of attraction of $p_i$ 
and $K$ is an invariant Cantor set such  that $T\mid_K$ is expanding.
Since hyperbolicity is $C^r$-open and dense in the interval (see~\cite{KSvS07a})  and $\al$ is assumed
small, by structural stability all interval maps $T_x=T+a(x)$ are hyperbolic and topologically 
conjugated to $T$: for every $x\in X$ there exists an homeomorphism  $h_x$ that is $C^0$-close to identity,  varies continuously with $x$ and $T_{x} \circ h_x = h_x \circ T$. 
In consequence,  $p_i^x=h_x(p_i)$ is an attracting periodic point of period $\pi(p_i)$ and 
$K_x=h_x(K)$ is an invariant expanding Cantor set for the interval map $T_x$.
We first prove that similar features are inherited by the skew-product $\psi$.

\begin{proposition}\label{prop:hyp.saddle}
If $\vep>0$ is small, there exists $N\ge 1$ and an open set $U\subset I$ obtained as finite
union of open intervals and $c>0$ so that $\cU=X \times U$ is positively $\psi^N$-invariant and  
	$$
	\limsup_{n\to\infty} \frac1n \log \left\| D\psi^n(x,y) \frac{\partial}{\partial y} \right\| 
		\leq -c <0
	$$
for all $(x,y)\in \cU$.
Moreover, there exists a small open neighbourhood $V$ of the critical points for $T$ and $K\ge 1$ such that   
$\varphi^K( X \times V) \subset \mathcal U$.
\end{proposition}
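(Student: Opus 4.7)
The plan is to build $U$ as a small neighbourhood of the union of the attracting periodic orbits of $T$ organized as a union of cyclic collections of intervals, and then to use that $a$ is $C^3$-small to transfer the trapping and contraction properties of $T$ to every fibered iterate $T_x^n = T_{S^{n-1}(x)} \circ \cdots \circ T_{S(x)} \circ T_x$ uniformly in $x$.

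First I would use the assumption that $T$ is hyperbolic with negative Schwarzian derivative to fix the combinatorial picture: there are finitely many attracting periodic orbits $\{p_i\}$ of periods $\pi(p_i)\ge 1$ with derivatives $|(T^{\pi(p_i)})'(p_i)|<1$, and $I=K\cup \bigcup_i B(p_i)$ with $T\mid_K$ expanding. Around each $p_i$ one can choose a small open interval $U_i^0$ such that $T^{\pi(p_i)}(\overline{U_i^0})\subset U_i^0$ with uniform contraction, and set $U_i=\bigcup_{j=0}^{\pi(p_i)-1} T^j(U_i^0)$, a cyclic union of intervals with $T(U_i)\subset U_i$. Then $U=\bigcup_i U_i$ is a finite union of open intervals and, letting $N=\mathrm{lcm}\{\pi(p_i)\}$, there is $\lambda\in (0,1)$ such that $T^N(\overline U)\subset U$ with $|(T^N)'|\le \lambda$ on $\overline U$.

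Next I would transfer these properties to the skew-product. Since $\|a\|_{C^3}<\vep$, every interval map $T_x=T+a(x)$ is $C^3$-close to $T$ uniformly in $x\in X$, and the composition $T_x^N$ is the product of $N$ nearby factors. A standard continuity argument (triangle inequality for $C^1$ norms, and compactness of $\overline U$) shows that, taking $\vep$ small enough, one has $T_x^N(\overline U)\subset U$ and $|(T_x^N)'|\le \lambda'<1$ on $U$, uniformly in $x\in X$. Therefore $\mathcal U=X\times U$ is positively $\psi^N$-invariant and, since $\|D\psi^n(x,y)\partial/\partial y\|=|(T_x^n)'(y)|$, writing $n=kN+r$ and iterating the bound yields
\[
\frac{1}{n}\log\bigl\|D\psi^n(x,y)\tfrac{\partial}{\partial y}\bigr\|
\le \frac{1}{n}\bigl(k\log\lambda' + O(1)\bigr),
\]
for $(x,y)\in\mathcal U$, from which $\limsup_n \frac{1}{n}\log\|D\psi^n(x,y)\partial/\partial y\|\le -c$ with $c=-\frac{1}{N}\log\lambda'>0$, uniformly on $\mathcal U$.

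Finally, for the statement on critical points, hyperbolicity of $T$ together with negative Schwarzian forces each critical point $c$ of $T$ to lie in some basin $B(p_i)$, so there exists $K_0\ge 1$ with $T^{K_0}(c)\in U$ and therefore a small open neighbourhood $V_c$ of $c$ with $T^{K_0}(V_c)\Subset U$; taking $V$ as the union over the (finitely many) critical points and $K=\max K_0$ gives $T^K(V)\Subset U$. Once again, choosing $\vep$ small, uniform $C^0$-closeness of $T_x$ to $T$ on the compact set $\overline V$ gives $T_x^K(V)\subset U$ for all $x\in X$, which is exactly $\psi^K(X\times V)\subset \mathcal U$.

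The main obstacle I expect is the uniform (in $x\in X$) control of the composition $T_x^N$: although each factor is $\vep$-close to $T$, one must ensure that this closeness is preserved under composition on the compact set $\overline U$ without losing the strict inclusion $T^N(\overline U)\subset U$ or the contraction $|(T^N)'|<1$. This requires choosing $U$ with a definite ``margin'' and $\vep$ small enough that the accumulated $C^1$ error after $N$ compositions is strictly smaller than that margin, which in turn forces $\vep$ to depend on $N$ (hence on the periods of the attracting orbits and the contraction rate), but not on $x$.
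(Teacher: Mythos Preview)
Your proposal is correct and follows essentially the same approach as the paper: build $U$ as a neighbourhood of the attracting periodic orbits on which $T^N$ is a strict contraction, transfer both the trapping and the contraction uniformly to every $T_x^N$ by taking $\vep$ small, and then invoke negative Schwarzian derivative (Singer's theorem) to ensure the critical points fall into $U$ after finitely many steps. The only cosmetic differences are that the paper takes $N=\prod_i\pi(p_i)$ rather than the lcm and does not spell out the margin discussion you give at the end.
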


\begin{proof}
Given any $i$, since $p_i$ is a periodic atractor for $T$ then using the uniform continuity of the derivative 
there exists $\hat \lambda_i<1$ and an open neighborhood $\cU_i$ of the forward orbit of $p_i$ such that $T^{\pi(p_i)}(U_i) \subsetneq U_i$ and $|(T^{\pi(p_i)})'(y)| \leq \hat \lambda_i^{\pi(p_i)}<1$ for all $y\in U_i$. 
Furthermore, if $\vep$ is small, the fact that $\|T-T_x\|_{C^3}<\vep$ there exists $\hat \lambda_i<\lambda_i<1$ satisfying  $T_x^{\pi(p_i)}(U_i) \subsetneq U_i$ and $|(T_x^{\pi(p_i)})'(y)| \leq \lambda_i^{\pi(p_i)}<1$ for all $y\in U_i$. 

Set $N=\prod_i \pi(p_i)$. On the one hand the later proves that the open set $U$ obtained by the finite union of the
open intervals $U_i$ is $T_x^N$-invariant for all $x\in X$ and, consequently, the set $\cU=X\times U$ is positively 
$\psi^N$-invariant. 
On the other hand, if  $\lambda$ is chosen such that $\la_i<\lambda<1$ for all $i$ then by the chain rule 
$
|(T_x^N)'(y)| 
	\leq \lambda^N
$
for all $y\in U$ and $x\in X$. Now we observe that if $n=qN+r$ with $q\in \mathbb N_0$ and $0\le r\le N-1$
then
\begin{align*}
\left\| D\psi^n(x,y) \frac{\partial}{\partial y} \right\|
	& = |(T_{S^{n-1}(x)}\circ \dots \circ T_{S(x)}\circ T_x)'(y)| \\
	& =  |(T_{S^{qN}(x)}^r)' (T_x^{qN} (y)) | \; \prod_{j=0}^{q-1}  | (T_{S^{jN}(x)}^N)' (T_x^{j N}(y)) | \\
	& \leq \|T'\|_{\infty}^N \lambda^{-N} \; \lambda^{n} 
\end{align*}
for all $(x,y) \in \cU$.  Since the later implies that 
\begin{align}\label{eq:upperbound}
\limsup_{n\to\infty} \frac1n \log  \left\| D\psi^n(x,y) \frac{\partial}{\partial y} \right\| 
	& \leq \log \la<0
\end{align}
this finishes the proof of first statement of the proposition.

Now, since $T$ has negative Schwarzian derivative one can use  a theorem by Singer to deduce that the orbit
of any critical point belongs to the topological basin of attraction of some periodic attractor. Since there are
finitely many periodic attractors for $T$ then there exists $K\ge 1$ such that
$T^K(c)\in U$ for every $c$ such that $T'(c)=0$. By continuity,  there exists an open neighbourhood  $V$ of 
the critical points for $T$ such that $\psi^K (X \times V) \subset \cU$, provided that $\al$ is small enough.
This finishes the proof of the proposition.
\end{proof}

\begin{remark}
Let us mention that the set $\bigcup_i \big\{   (x, h_x( T^j(p_i) )) : x\in X , 1\le j \le \pi(p_i) \big\}
\subset \cU$
of curves formed by the continuations of the periodic attractors is not necessarily $\psi$-invariant 
(e.g. if $a$ is a Morse function it causes implies a transversality condition for admissible curves). 
An interesting question is to determine if, under some conditions on the function $a$, the positively 
invariant set  $\mathcal G=\bigcap_{n\ge 0} \psi^n(\cU)$ support an absolutely continuous SRB measure.
\end{remark}

We now describe the asymptotics of Lebesgue almost every point that is mapped in 
the trapping region $\cU$. More precisely we prove the following:

\begin{proposition}
There exists $\vep>0$ such that if $\|a\|_{C^3}<\vep$ then there exists an SRB measure $\nu$ for $\psi$, it 
is  hyperbolic and its basin of attraction contains Lebesgue almost every point in some open subset of $\cU$.
\end{proposition}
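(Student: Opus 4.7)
The plan is to exploit the uniform fiber contraction provided by the previous proposition to build $\nu$ as a graph lift of the base SRB measure $\mu_S$. Fix one of the connected components $\cU_0 = X \times U_0$ of the trapping region $\cU$, where $U_0 \subset I$ is one of the intervals in the cyclic decomposition associated to some periodic attractor of $T$. Passing to $\psi^N$ (and at the end averaging the resulting invariant measure over $\psi, \psi^2, \dots, \psi^{N-1}$), we may assume that $\psi(\cU_0) \subset \cU_0$ and that $|T_x'(y)| \leq \lambda < 1$ uniformly on $\cU_0$, as is established in the course of proving the previous proposition.

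To construct $\nu$, let $(\tilde X, \tilde S, \tilde \mu_S)$ be the natural extension of $(X, S, \mu_S)$, with canonical projection $\pi: \tilde X \to X$ satisfying $\pi \circ \tilde S = S \circ \pi$ and $\pi_* \tilde \mu_S = \mu_S$; the measure $\tilde \mu_S$ is ergodic because $\mu_S$ is. Fix $y_0 \in U_0$ and define
\[
\tilde \gamma_n(\tilde x) := T_{\pi(\tilde S^{-1}(\tilde x))} \circ T_{\pi(\tilde S^{-2}(\tilde x))} \circ \cdots \circ T_{\pi(\tilde S^{-n}(\tilde x))}(y_0).
\]
The uniform contraction yields $|\tilde \gamma_{n+1}(\tilde x) - \tilde \gamma_n(\tilde x)| \leq \lambda^{n} \diam(U_0)$, so $\tilde \gamma_n$ converges uniformly to a measurable section $\tilde \gamma: \tilde X \to \overline{U_0}$ that is independent of the choice of $y_0$ and satisfies the invariance equation $\tilde \gamma \circ \tilde S = T_{\pi(\cdot)} \circ \tilde \gamma$. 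Set $\tilde \nu := (\operatorname{id}_{\tilde X}, \tilde \gamma)_* \tilde \mu_S$ and let $\nu$ be its pushforward by $(\pi, \operatorname{id})$ to $X \times I$. Then $\nu$ is $\psi$-invariant and ergodic.

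To identify $\nu$ as an SRB measure, let $(x, y) \in \cU_0$ be such that $x$ is Lebesgue-generic for $\mu_S$; this is a full Lebesgue measure condition on $X$ since $\mu_S$ is SRB. For any continuous $\phi: X \times I \to \R$ and any lift $\tilde x \in \pi^{-1}(x)$, the fiber contraction together with uniform continuity of $\tilde \gamma$ along orbits gives
\[
\bigl| \phi(\psi^n(x, y)) - \phi(S^n(x), \tilde \gamma(\tilde S^n(\tilde x))) \bigr| \leq \omega_\phi(C\lambda^n),
\]
where $\omega_\phi$ denotes the modulus of continuity of $\phi$. Since $\tilde S^n(\tilde x)$ equidistributes with respect to $\tilde \mu_S$, taking Birkhoff averages shows $(x, y) \in B(\nu)$, and hence $B(\nu)$ contains a full Lebesgue measure subset of the open set $\cU_0$. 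When $\mu_S$ is hyperbolic, the Lyapunov spectrum of $\nu$ splits as the spectrum of $\mu_S$ along the $TX$ directions together with the fiber exponent $\int \log|T_x'(y)|\, d\nu \leq \log\lambda < 0$, so $\nu$ inherits the hyperbolicity.

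The main technical obstacle is managing the natural extension $\tilde X$ and verifying measurability of $\tilde \gamma$ together with the correct disintegration of the pushforward $\nu$, so that the Birkhoff-averaging argument goes through without hidden null-set issues. A purely intrinsic alternative that avoids the natural extension is to take $\nu$ as any weak-$*$ accumulation point of $\frac{1}{n}\sum_{k=0}^{n-1}\psi^k_*\bigl(\Leb|_{\cU_0}/\Leb(\cU_0)\bigr)$; the uniform fiber contraction then forces the conditional measures over each base point to concentrate on Dirac masses, yielding uniqueness of the limit and ergodicity without ever constructing $\tilde X$.
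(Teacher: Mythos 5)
Your proof is correct in substance but follows a genuinely different route from the paper. The paper avoids the natural extension entirely: it fixes $y_0\in U_i$, forms the Ces\`aro averages $\nu_n=\frac1n\sum_{j=0}^{n-1}\psi^j_*(\mu_S\times\delta_{y_0})$, and uses the uniform fiber contraction to show that the functional $G\mapsto\lim_n\int G\,d\nu_n$ is well defined (the Birkhoff averages of $G$ along $(x,y_1)$ and $(x,y_2)$ differ by at most $\delta$ for large $n$, uniformly in $y_1,y_2\in U_i$), then invokes Riesz representation to produce $\nu$ and ergodicity of $\mu_S$ to make the limit a.e.\ constant in $x$ --- essentially the ``intrinsic alternative'' you sketch in your closing paragraph, except seeded with $\mu_S\times\delta_{y_0}$ rather than normalized Lebesgue. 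Your construction of the attracting invariant graph $\tilde\gamma$ over the natural extension buys more: the identity $\nu=(\pi,\mathrm{id})_*(\mathrm{id},\tilde\gamma)_*\tilde\mu_S$ makes ergodicity, uniqueness of the limit, and the splitting of the Lyapunov spectrum (base spectrum plus the fiber exponent $\int\log|T_x'(y)|\,d\nu\le\log\lambda$) completely transparent, and note that $\tilde\gamma$ is in fact a \emph{uniform} limit of continuous functions on the inverse limit, hence continuous, which disposes of the measurability and disintegration worries you raise. The one point where you overstate is the claim that Lebesgue genericity for $\mu_S$ is ``a full Lebesgue measure condition on $X$ since $\mu_S$ is SRB'': the definition only gives $\Leb(B(\mu_S))>0$, so strictly one only gets that $B(\nu)$ contains $(B(\mu_S)\cap X)\times U_i$ up to null sets, and one needs $B(\mu_S)$ to fill an open set mod zero to literally obtain the stated conclusion. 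The paper's own proof has exactly the same jump (it passes from a $\mu_S\times\Leb$-a.e.\ statement to a $\Leb$-a.e.\ one), and the gap is harmless for the intended base maps (expanding maps, rotations, Manneville--Pomeau, Viana maps), so this is a shared caveat rather than a defect specific to your argument.
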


\begin{proof}
Let $\mu_S$ denote the unique SRB measure for $S$ and $\vep$ and the trapping region $\cU=X\times U$ 
as in Proposition~\ref{prop:hyp.saddle}. Take $y_0\in U$ arbitrary and consider the sequence of probability
 measures given by
$$
\nu_n=\frac1n\sum_{j=0}^{n-1} \psi^j_* ( \mu_S \times \delta_{y_0}),
	\quad n\ge 1.
$$
By construction, $x_0\in U_i$ for some $i$ and thus belongs to the topological basin of atraction of $p_i$
with respect to the polynomial $T$.

We claim that $(\nu_n)_n$ is convergent to an $\varphi$-invariant, ergodic probability measure $\nu$ whose 
basin of attraction covers Lebesgue almost every point in the open set $X\times U_i$. 
On the one hand, if one considers the projection $\pi_X: X\times I \to X$ on the first coordinate then 
the sequence $(\pi_X)_*\nu_n=\frac1n \sum_{j=0}^{n-1} S^j_* \mu_S=\mu_S$ for all $n$.
On the other hand, the uniform contraction on $X\times U_i$ under iterations by $T_x^n$ implies that for any 
continuous observable $G$, $\delta>0$ and points $y_1, y_2 \in \{x\}\times U_i$ the Birkhoff averages satisfy
$|\frac1n\sum_{j=0}^{n-1}  G \circ \psi^j(x,y_1) - \frac1n\sum_{j=0}^{n-1}  G \circ \psi^j(x,y_2)|
	<\delta$
provided that $n$ is large (depending only on $G$ and $\lambda$). 
Therefore,  the functional $\Psi: C( X \times \overline{U_i})\to \mathbb R$ given by
\begin{align}\label{eq.measurecontract}
\Psi(G) & = \lim_{n\to\infty}  
	\frac1n\sum_{j=0}^{n-1} \int \max_{y\in \overline{U_i}} [G \circ \psi^j(x,y) ] \; d\mu_S(x)\\
	  & = \lim_{n\to\infty}  
	\frac1n\sum_{j=0}^{n-1} \int G \circ \psi^j(x,y_0)  \; d\mu_S(x) \\
	& = \lim_{n\to\infty}  \int G \; d\nu_n
\end{align}
is well defined and  clearly continuous. By Riesz representation theorem there exists a probability measure
$\nu$ with $\supp(\nu)\subset X\times \overline{U_i}$ and such that $\Psi(G)=\int G \,d\nu$ for all continuous function $G$.
Furthermore, $(\pi_X)_*\nu=\mu_X$ and, since $\mu_X$ is $S$-invariant and ergodic, the limit \eqref{eq.measurecontract} is almost everywhere constant in $x$ and consequently
$$
\int G \, d\nu
	= \lim_{n\to+\infty} \frac1n\sum_{j=0}^{n-1} G \circ \psi^j(x,y)  
$$
for $\mu_S\times \Leb$-almost every $(x,y)\in X \times \overline{U_i}$. In particular, this proves that $\nu$ is 
$\psi$-invariant and ergodic, and that Lebesgue almost every $(x,y)\in X\times \overline{U_i}$ belongs to 
the basin of attraction $B(\nu)$ and so $\nu$ is the unique SRB measure supported in $X\times \overline{U_i}$.
\end{proof}

Hence to finalize the proof of Theorem~\ref{thm:hiperbolicidade.fibrada} let us assume that 
$\mu_S$ has only non-zero Lyapunov exponents and prove that the SRB measure $\nu$ is
hyperbolic. This holds from the fact that the Lyapunov exponents for $\nu$ coincide with the ones
of $\mu_S$ together with the fibered Lyapunov exponent, which is negative since it is bounded from above by \eqref{eq:upperbound} since $\nu$ is supported in $X\times \overline{U_i}$. This
finishes the proof of the theorem.

\begin{remark}
One final remark is that it is not hard to check from the proof above that there are exactly as many hyperbolic SRB measures supported in $\cU$ as the number of periodic attracting points for the hyperbolic interval map $T$. This comes from the fact that the measure $\delta_{y_0}$ is associated to a point $y_0$ that belongs to the topological
basin of attraction of some of the periodic attractors. 
\end{remark}

\subsection*{Acknowledgments} The author is grateful to J. F. Alves, J. Rivera-Letelier, D. Schnellmann, S. Senti, O. Sester and M. Viana for their comments and inspiring conversations.  This work was partially supported by CNPq and FAPESB.

\bibliographystyle{alpha}

\end{document}